\documentclass[11pt]{article}
\usepackage[a4paper,margin=1in]{geometry}
\usepackage{amsmath,amssymb,amsfonts,mathrsfs,mathtools}
\usepackage{amsthm}
\usepackage{xcolor}
\usepackage[colorlinks=true,citecolor=blue,linkcolor=blue,urlcolor=blue]{hyperref}
\usepackage{enumitem}
\allowdisplaybreaks
\newtheorem{theorem}{Theorem}[section]
\newtheorem{proposition}[theorem]{Proposition}
\newtheorem{lemma}[theorem]{Lemma}
\newtheorem{corollary}[theorem]{Corollary}
\theoremstyle{definition}

\theoremstyle{remark}

\numberwithin{equation}{section}

\newcommand{\R}{\mathbb R}
\newcommand{\Z}{\mathbb Z}
\newcommand{\cB}{\mathcal B}
\newcommand{\cF}{\mathcal F}
\newcommand{\cS}{\mathcal S}
\newcommand{\supp}{\operatorname{supp}}
\newcommand{\esssup}{\operatorname*{ess\,sup}}
\newcommand{\essinf}{\operatorname*{ess\,inf}}

\title{Time-refined Triebel--Lizorkin Estimates and Applications to Keller--Segel Type Equations}
\author{Wenhai Shan$^{a,}$\thanks{E-mail: mathshanwenhai@163.com}\quad
Xiao-Song Yang$^{a,b,}$\thanks{Corresponding author: yangxs@hust.edu.cn}\\
\small $^{a}$School of Mathematics and Statistics, Huazhong University of Science and Technology,\\
\small Wuhan, Hubei 430074, P.R. China\\
\small $^{b}$Hubei Key Laboratory of Engineering Modeling and Scientific Computing,\\
\small Huazhong University of Science and Technology, Wuhan, Hubei 430074, P.R. China}
\date{}

\begin{document}
\maketitle

\begin{abstract}
We develop heat-flow estimates in critical homogeneous Triebel--Lizorkin
spaces and apply them to two-dimensional Keller--Segel type equations.
For $q>2$, we show that the heat-flow from
$\dot F^0_{1,q}(\mathbb R^2)$ to
$L^2(0,T;\dot F^1_{1,q}(\mathbb R^2))$ is unbounded.  This motivates the introduction of new spaces, time-refined Triebel--Lizorkin spaces, in which the time norm is taken
before the dyadic summation.  In this framework, we establish a family
of heat smoothing estimates together with the corresponding maximal regularity.  We further prove Banach-valued Peetre
estimates, Banach-valued Jawerth-type estimate, homogeneous Poisson
estimate, and an endpoint bilinear estimate for the Keller--Segel drift.
As an application, we obtain local well-posedness for arbitrary initial
data and global well-posedness for sufficiently small initial data in the
critical space $\dot F^0_{1,q}(\mathbb R^2)$, $2\le q<\infty$, for the
two-dimensional parabolic--elliptic Keller--Segel equation.  The same
analytic framework also yields analogous well-posedness results for a
related two-component drift--diffusion system.
\end{abstract}

\medskip
\noindent\textbf{Keywords.} Keller--Segel equation; Triebel--Lizorkin spaces;
Time-refined spaces; Heat estimates; Well-posedness.

\medskip
\noindent\textbf{2020 Mathematics Subject Classification.} 35K15; 35K55;
35Q92; 42B25; 46E35.

\section{Introduction}

\subsection{Heat flow, critical spaces, and time-refined regularity}

The heat equation is a fundamental model witnessing the interaction between semigroup theory,
harmonic analysis, and nonlinear parabolic partial differential equations.  For the
inhomogeneous problem
\[
\partial_t u-\Delta u=F,
\qquad
u|_{t=0}=u_0,
\]
the corresponding mild solution is given by
\[
u(t)=e^{t\Delta}u_0+\int_0^t e^{(t-\tau)\Delta}F(\tau)\,d\tau.
\]
The mapping properties of the free heat semigroup and the Duhamel
operator form the basic linear framework for fixed-point arguments in
nonlinear parabolic equations {\color{blue}and often play a decisive role in determining the range of well-posedness; see \cite{Weissler1981}. 
	
At the Fourier level, the decay of the multiplier
\(e^{-t|\xi|^2}\) suppresses high frequencies and produces spatial
smoothing. Heat estimates quantify the decay and smoothing effects of the heat
	flow.  They determine the natural resolution spaces, control both the
	free evolution and the response to inhomogeneous forcing, and provide
	the short-time smallness required to close contraction
	arguments. Combined with suitable nonlinear estimates, they provide the basic
	linear tools for constructing solutions, proving uniqueness and
	continuous dependence, and establishing parabolic regularization;
	see, for example, \cite{Henry1981,Pazy1983,Amann1995}.
	
	More generally, heat estimates
	describe how temporal integrability of the forcing can be converted
	into spatial regularity of the solution. For the Duhamel operator, this principle leads naturally to a family of estimates mapping a forcing term with time integrability \(L^a\) to a
solution with time integrability \(L^r\), where \(a\le r\).  The gain of
spatial regularity is determined by the parabolic time scale.  In the
diagonal case \(a=r\), one recovers maximal regularity: the time
derivative and the highest-order spatial derivative belong to the same
Bochner space as the forcing term.  When \(a<r\), one obtains off-diagonal
estimates, in which part of the spatial smoothing is exchanged
for improved time integrability.  Such estimates are especially useful
when the nonlinear forcing and the desired solution space carry
different natural time exponents; see
\cite{Amann1995,DoreVenni1987,Weis2001}}

{\color{blue}
	To formulate the heat estimates discussed above, one has to choose a
	spatial Banach space that reflects both the parabolic scaling of the
	equation and the structure of the nonlinear terms.  Classical
	heat estimates are commonly developed in Lebesgue and
	Sobolev--Bessel potential spaces; see
	\cite{Pazy1983,Amann1995}.  At a more abstract level, maximal
	\(L^\rho\)-regularity is studied for evolution equations on Banach
	spaces, with the UMD property and suitable \(R\)-boundedness assumptions
	playing a central role in the standard theory; see
	\cite{DoreVenni1987,Weis2001}.
	
In the analysis of nonlinear equations at scaling-critical regularity,
Besov and Triebel--Lizorkin spaces constitute two principal
Littlewood--Paley scales.  Both are well adapted to parabolic equations,
since the dyadic decomposition makes the smoothing action of the heat
semigroup and the frequency interactions of the nonlinear terms
explicit.  Their different norm structures, however, lead to distinct
analytic features; see
\cite{Triebel1983,BahouriCheminDanchin2011}.

Heat estimates in the Besov scale have been extensively developed and
play a central role in the study of nonlinear parabolic equations.  The endpoint case with spatial \(L^1\)-integrability is substantially more delicate, since it lies outside the
standard UMD-based maximal-regularity theory.  In
\cite{OgawaShimizu2010}, Ogawa and Shimizu established endpoint maximal regularity for the heat equation in the critical Besov spaces
\[
\dot B^0_{1,\rho}(\mathbb R^n),
\qquad
1<\rho\leq\infty.
\]
They further applied the case \(\rho=2\) to the two-dimensional
Keller--Segel system in
\(\dot B^0_{1,2}(\mathbb R^2)\).

These results naturally raise the corresponding endpoint question in
the Triebel--Lizorkin space \(\dot F^s_{1,q}\).  Let
\((\dot\Delta_j)_{j\in\mathbb Z}\) be a homogeneous
Littlewood--Paley decomposition.  For the parameters considered below,
we define
\[
\|f\|_{\dot F^s_{1,q}}
=
\left\|
\left(
2^{js}|\dot\Delta_j f(x)|
\right)_{j\in\mathbb Z}
\right\|_{L^1_x(\ell^q_j)}.
\]
The essential distinction from the corresponding Besov norm is that
the dyadic \(\ell^q\)-summation is performed pointwise in the spatial
variable before the final \(L^1_x\)-integration; see
\cite{Triebel1983,FrazierJawerth1985}.  This ordering becomes especially
significant after a time norm is introduced, since time integration,
spatial integration, and dyadic summation cannot in general be
interchanged without changing the resulting norm.  Accordingly, for
	the critical initial space \(\dot F^0_{1,q}\), the natural first candidate for capturing the gain of one spatial
	derivative in \(L^2_t\) is the standard
Bochner space}
\[
L^2(0,T;\dot F^1_{1,q}).
\]
In this norm, the full spatial Triebel--Lizorkin norm is formed at each
fixed time before the temporal $L^2$-integration is taken.  We first show
that this ordering is incompatible with the dyadic heat scales whenever
$q>2$.

\begin{proposition}[Obstruction of the standard Bochner heat estimate]
\label{prop:failure-ordinary-heat}
Let $q>2$ and $T>0$. There is no constant $C>0$ satisfying
\[
\|e^{t\Delta}u_0\|_{L^2(0,T;\dot F^1_{1,q})}
\le
C\|u_0\|_{\dot F^0_{1,q}}
\]
for every $u_0\in\dot F^0_{1,q}(\mathbb R^2)$.
\end{proposition}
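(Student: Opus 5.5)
The approach is a lacunary counterexample. The idea is that the $L^2_t$ norm taken outside the spatial Triebel--Lizorkin norm controls an $\ell^2$-sum of dyadic $L^1$ norms, i.e. a $\dot B^0_{1,2}$-type quantity. By contrast, $\dot F^0_{1,q}$ for $q>2$ only controls an $\ell^q$-sum whenever the dyadic pieces sit on the \emph{same} spatial region. So I will build data with $N$ dyadic blocks spread over a common unit ball, for which $\|u_0\|_{\dot F^0_{1,q}}\lesssim N^{1/q}$ while the left-hand side is $\gtrsim N^{1/2}$. Letting $N\to\infty$ then rules out any constant $C$.

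\textbf{Construction.} Fix a real Schwartz function $\chi$ with $\widehat\chi$ supported in a small ball $B(0,\varepsilon)$, and $\chi\neq0$. Choose $j_0$ so large that $2^{-2j_0}\le T/2$. For $j=j_0,\dots,j_0+N-1$, set
\[
g_j(x)=\chi(x)\cos(2^j x_1),\qquad u_0=\sum_{j=j_0}^{j_0+N-1} g_j .
\]
Each $\widehat{g_j}$ is supported in $B(\pm 2^j e_1,\varepsilon)$. Choosing the Littlewood--Paley cutoff $\varphi$ equal to $1$ near $|\xi|=1$ and $\varepsilon$ small, we get $\dot\Delta_k g_j=\delta_{jk}\,g_j$, so $\dot\Delta_k u_0=g_k$ exactly. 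Hence
\[
\|u_0\|_{\dot F^0_{1,q}}=\Big\|\big(\textstyle\sum_j|\chi(x)|^q|\cos(2^jx_1)|^q\big)^{1/q}\Big\|_{L^1_x}\le N^{1/q}\|\chi\|_{L^1}.
\]

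\textbf{Lower bound.} For each $t$ we have
\[
\|e^{t\Delta}u_0\|_{\dot F^1_{1,q}}\ge \sup_k 2^k\|\dot\Delta_k e^{t\Delta}u_0\|_{L^1}=\sup_k 2^k\|e^{t\Delta}g_k\|_{L^1},
\]
since the $\ell^q$ norm dominates each single entry. For $t\in I_k:=[2^{-2k},2\cdot2^{-2k}]$, the multiplier $e^{-t|\xi|^2}$ restricted to $B(\pm2^ke_1,\varepsilon)$ equals $e^{-t2^{2k}}$ times a smooth perturbation that is uniformly close to $1$. Therefore $\|e^{t\Delta}g_k\|_{L^1}\ge c\|g_k\|_{L^1}\ge c'>0$, uniformly in $k$. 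Here I use that $\|\chi\cos(2^k\cdot)\|_{L^1}\to \tfrac{2}{\pi}\|\chi\|_{L^1}$, which is bounded below for large $k$. The intervals $I_k$ are disjoint and lie in $(0,T)$. Consequently
\[
\int_0^T\|e^{t\Delta}u_0\|_{\dot F^1_{1,q}}^2dt\ge\sum_{k}\int_{I_k}c'^2 2^{2k}\,dt = c'^2N .
\]
The ratio of the two sides is therefore $\gtrsim N^{1/2-1/q}\to\infty$ because $q>2$.

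\textbf{Main obstacle.} The step I expect to require the most care is the uniform-in-$k$ lower bound $\|e^{t\Delta}g_k\|_{L^1}\gtrsim1$ for $t\in I_k$. I would handle it by writing
\[
e^{t\Delta}g_k=\mathrm{Re}\big(e^{i2^kx_1}\,m_{t,k}(D)\chi\big),\qquad m_{t,k}(\eta)=e^{-t|2^ke_1+\eta|^2}\ \text{ on } |\eta|<\varepsilon .
\]
Factor $m_{t,k}(\eta)=e^{-t2^{2k}}\,e^{-t(2^{k+1}\eta_1+|\eta|^2)}$. For $t\le 2\cdot2^{-2k}$, the second factor differs from $1$ by $O(2^{-k}\varepsilon)$ in $C^M(B(0,\varepsilon))$. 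Hence $m_{t,k}(D)\chi=e^{-t2^{2k}}(\chi+r)$ with $\|r\|_{L^1}$ small, by a standard multiplier bound on a fixed compact frequency set. Combined with $e^{-t2^{2k}}\ge e^{-2}$, this gives the required bound.

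A second technical point is that $u_0$ lies in $\dot F^0_{1,q}$ modulo polynomials. This is immediate here, since $u_0$ is a Schwartz function with Fourier support away from the origin.
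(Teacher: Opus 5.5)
Your proof is correct and is essentially the paper's argument. $N$ modulated Schwartz bumps at lacunary frequencies give $\|u_0\|_{\dot F^0_{1,q}}\lesssim N^{1/q}$, while keeping only the single block active on each disjoint heat-time interval $I_k$ forces the Bochner norm to be $\gtrsim N^{1/2}$. The only difference is the per-block lower bound, where the paper avoids your multiplier perturbation by taking $t\le c_0 2^{-2k}$ with $c_0$ small and using $\|e^{t\Delta}f_k-f_k\|_{L^1}\le t\|\Delta f_k\|_{L^1}\lesssim t2^{2k}$ (your perturbation's $C^M$ error is really $O(2^{-k})$ rather than $O(2^{-k}\varepsilon)$, which is harmless).
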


{\color{blue}The obstruction in Proposition~\ref{prop:failure-ordinary-heat} comes from
	combining dyadic blocks that are active on different heat time scales.
	For initial data consisting of \(N\) well-separated blocks, the
	\(\dot F^0_{1,q}\)-norm is of size \(N^{1/q}\), whereas their essentially
	disjoint contributions in time are accumulated by the \(L^2_t\)-norm
	with size \(N^{1/2}\).  Thus, the claimed estimate would imply
	\[
	N^{1/2}\lesssim N^{1/q},
	\]
	which is impossible for \(q>2\).  Each block still enjoys the expected
	heat smoothing, the obstruction is caused only by the order of time
	integration and frequency summation in the standard Bochner norm.  This
	also explains why \(q=2\) is the distinguished case.

To place the time norm at the level of each dyadic block, motivated by
 the Chemin--Lerner refinement of Besov spaces
\cite{CheminLerner1995}, we introduce a new norm, the time-refined Triebel–Lizorkin norm
\[
\|u\|_{\dot F^s_{1,q}(L^\rho_T)}
:=
\left\|
\left(
2^{js}
\|\dot\Delta_j u(\cdot,x)\|_{L^\rho(0,T)}
\right)_{j\in\mathbb Z}
\right\|_{L^1_x\ell^q_j}.
\]}
 Equivalently,
this norm may be viewed as the Banach-valued Triebel--Lizorkin norm
\[
\dot F^s_{1,q}
\bigl(\mathbb R^2;L^\rho(0,T)\bigr).
\]
We refer to this new space as a \emph{time-refined Triebel--Lizorkin space}.
The word ``refined'' describes the time--frequency ordering and does not
assert a universal inclusion relation with the standard Bochner space.

{\color{blue}
The advantage of this time-refined space is that the time norm is taken
separately on each dyadic block, in accordance with its natural heat
time scale, before the dyadic summation and spatial integration are
performed.  This preserves the pointwise frequency structure of the
Triebel--Lizorkin norm and recovers the heat estimates that fail in the
standard Bochner setting. More importantly, it provides a common framework for the free heat flow and the Duhamel operator.
}

The Banach-valued formulation reveals a second difficulty, now associated
with the spatial endpoint $p=1$.  The usual vector-valued
Hardy--Littlewood maximal inequality cannot be applied directly at the
$L^1_x$ level.  A useful precedent is the work of Guo and Li
\cite{GuoLi2021} on the Euler equations in critical Triebel--Lizorkin
spaces.  A key ingredient in their treatment of the endpoint case is the
use of Peetre-type maximal estimates \cite{Peetre1975} for frequency-localized functions.
By taking a sufficiently small auxiliary power before invoking the
Fefferman--Stein vector-valued maximal inequality \cite{FeffermanStein1971}, this mechanism bypasses
the failure of the Hardy--Littlewood maximal operator to be bounded on
$L^1$. 

We adapt this endpoint mechanism to the present Banach-valued setting.
More precisely, for functions taking values in $L^\rho(0,T)$, we establish a Banach-valued Peetre estimate (Lemma \ref{lem:peetre-pointwise}) and derive the corresponding
sequence-valued estimate in $L^1_x\ell^q$ (Lemma \ref{lem:almost}).  This allows us to lift
frequency-localized heat-kernel estimates to the full time-refined
Triebel--Lizorkin norm. The first main result is stated as follows.

\begin{theorem}[Generalized time-refined heat estimate]
\label{thm:general-heat}
Let \(1\le q<\infty\), \(1\le a\le r\le\infty\), and \(s\in\mathbb R\).  Set
\[
\sigma
=
2\left(1+\frac1r-\frac1a\right).
\]
Let
\[
u(t)
=
e^{t\Delta}u_0
+
\int_0^t e^{(t-\tau)\Delta}F(\tau)\,d\tau
\]
solve
\[
\partial_tu-\Delta u=F,
\qquad
u|_{t=0}=u_0.
\]
If
\[
u_0\in\dot F^{s+2-2/a}_{1,q},
\qquad
F\in\dot F^s_{1,q}(L^a_T),
\]
then
\[
\|u\|_{\dot F^{s+\sigma}_{1,q}(L^r_T)}
\lesssim
\|u_0\|_{\dot F^{s+2-2/a}_{1,q}}
+
\|F\|_{\dot F^s_{1,q}(L^a_T)}.
\]
Moreover,
\[
\|\Delta u\|_{\dot F^{s+\sigma-2}_{1,q}(L^r_T)}
\lesssim
\|u_0\|_{\dot F^{s+2-2/a}_{1,q}}
+
\|F\|_{\dot F^s_{1,q}(L^a_T)},
\]
and
\[
\partial_tu
\in
\dot F^{s+\sigma-2}_{1,q}(L^r_T)
+
\dot F^s_{1,q}(L^a_T).
\]
All implicit constants are independent of \(T\).
\end{theorem}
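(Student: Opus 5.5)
We reduce everything to frequency-localized pointwise bounds and then lift them with the Banach-valued Peetre machinery (Lemma~\ref{lem:peetre-pointwise} and Lemma~\ref{lem:almost}). Fix $j$, and write $\dot\Delta_j u(t)=e^{t\Delta}\dot\Delta_j u_0+\int_0^t e^{(t-\tau)\Delta}\dot\Delta_j F(\tau)\,d\tau$. Let $\widetilde{\dot\Delta}_j$ be a fattened projector with $\widetilde{\dot\Delta}_j\dot\Delta_j=\dot\Delta_j$. Then $e^{t\Delta}\dot\Delta_j=K_{j,t}*\widetilde{\dot\Delta}_j$, where $K_{j,t}=\mathcal F^{-1}[e^{-t|\xi|^2}\tilde\varphi(2^{-j}\xi)]$. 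A standard integration-by-parts argument gives the kernel bound
\[
|K_{j,t}(y)|\lesssim e^{-ct2^{2j}}\,2^{2j}(1+2^j|y|)^{-M}
\]
for any $M$, uniformly in $t\ge0$.

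\textbf{Free part.} Pointwise in $x$ we have $|e^{t\Delta}\dot\Delta_j u_0(x)|\lesssim e^{-ct2^{2j}}\,\mathfrak M_{j}(\dot\Delta_j u_0)(x)$, where $\mathfrak M_j g(x)=\sup_y |g(x-y)|(1+2^j|y|)^{-2/\theta}$ is the Peetre maximal function with a small auxiliary parameter $\theta<1$. Taking the $L^r(0,T)$ norm gives $\|e^{-ct2^{2j}}\|_{L^r}\lesssim 2^{-2j/r}$. This yields
\[
2^{j(s+\sigma)}\|e^{t\Delta}\dot\Delta_ju_0(x)\|_{L^r_T}\lesssim 2^{j(s+\sigma-2/r)}\mathfrak M_j(\dot\Delta_j u_0)(x),
\]
and $s+\sigma-2/r=s+2-2/a$, which is exactly the right regularity. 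Taking $L^1_x\ell^q_j$ norms and applying the Peetre/Fefferman--Stein bound (Lemma~\ref{lem:almost} with scalar values) gives $\|u_0\|_{\dot F^{s+2-2/a}_{1,q}}$.

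\textbf{Duhamel part.} This is the main obstacle, since the estimate must be pointwise in $x$ but in $L^r$-in-time of a convolution in time. Write
\[
\Big|\int_0^t e^{(t-\tau)\Delta}\dot\Delta_jF(\tau,x)\,d\tau\Big|\le\int_0^t e^{-c(t-\tau)2^{2j}}\int 2^{2j}(1+2^j|y|)^{-M}|\widetilde{\dot\Delta}_jF(\tau,x-y)|\,dy\,d\tau .
\]
Take $L^r_t$ inside the $y$-integral by Minkowski. Then Young's inequality in time with $1+1/r=1/a'' +1/a$, where $a''$ is defined by this relation, gives
\[
\|e^{-c t 2^{2j}}\|_{L^{a''}}\,\|\widetilde{\dot\Delta}_jF(\cdot,x-y)\|_{L^a_T}\lesssim 2^{-2j(1+1/r-1/a)}=2^{-j\sigma}.
\]
Hence
\[
2^{j(s+\sigma)}\Big\|\int_0^t\cdots\Big\|_{L^r_T}\lesssim 2^{js}\sup_y\frac{\|\widetilde{\dot\Delta}_jF(\cdot,x-y)\|_{L^a_T}}{(1+2^j|y|)^{2/\theta}},
\]
choosing $M>2/\theta+3$. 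This is the Banach-valued ($L^a(0,T)$-valued) Peetre maximal function of the frequency-localized function $\widetilde{\dot\Delta}_jF$. Lemma~\ref{lem:peetre-pointwise} bounds it by $\big(M(\|\widetilde{\dot\Delta}_jF\|_{L^a_T}^\theta)\big)^{1/\theta}$. Lemma~\ref{lem:almost}, i.e.\ Fefferman--Stein in $L^{1/\theta}(\ell^{q/\theta})$ with $1/\theta>1$ and $q/\theta>1$, then controls the $L^1_x\ell^q_j$ norm by $\|F\|_{\dot F^s_{1,q}(L^a_T)}$. The fattened projector is harmless by finite overlap. The time-Young step is justified for all $1\le a\le r\le\infty$ since $a''\in[1,\infty]$, and all constants scale out, so they are independent of $T$.

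\textbf{Remaining claims.} For $\Delta u$, note that $\dot\Delta_j\Delta u$ has the size $2^{2j}|\dot\Delta_j u|$ up to a multiplier that is handled by the same kernel bounds. Hence $\|\Delta u\|_{\dot F^{s+\sigma-2}_{1,q}(L^r_T)}$ is controlled by the same argument (or directly by the previous estimate). Finally, $\partial_t u=\Delta u+F$ in distributions, which gives
\[
\partial_t u\in\dot F^{s+\sigma-2}_{1,q}(L^r_T)+\dot F^s_{1,q}(L^a_T).
\]
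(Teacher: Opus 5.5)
Your proposal is correct and follows essentially the same route as the paper. Both arguments use the localized kernel bound $2^{2j}e^{-ct2^{2j}}(1+2^j|y|)^{-N}$, then Minkowski plus Young's inequality in time with $1/b=\sigma/2$, then domination of the spatial convolution by the scalar or $L^a(0,T)$-valued Peetre maximal function, and finally the Fefferman--Stein sequence bound (Corollary~\ref{cor:peetre-sequence}), with $\Delta u$ and $\partial_t u$ following from frequency localization and the equation. One cosmetic slip, which affects nothing: with $K_{j,t}$ built from $\tilde\varphi$, the identity you want is $e^{t\Delta}\dot\Delta_j=K_{j,t}*\dot\Delta_j$, not $K_{j,t}*\widetilde{\dot\Delta}_j$.
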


Theorem~\ref{thm:general-heat} contains both the off-diagonal estimates needed for
the nonlinear application and the diagonal maximal-regularity estimate.  In
particular, the choices
\[
(a,r,s)=(1,\infty,0)
\qquad\text{and}\qquad
(a,r,s)=(1,2,0)
\]
give, respectively,
\[
\left\|
\int_0^t e^{(t-\tau)\Delta}F(\tau)\,d\tau
\right\|_{\dot F^0_{1,q}(L^\infty_T)}
\lesssim
\|F\|_{\dot F^0_{1,q}(L^1_T)}
\]
and
\[
\left\|
\int_0^t e^{(t-\tau)\Delta}F(\tau)\,d\tau
\right\|_{\dot F^1_{1,q}(L^2_T)}
\lesssim
\|F\|_{\dot F^0_{1,q}(L^1_T)}.
\]
{\color{blue}
These off-diagonal estimates provide the bounds needed to control the
Keller--Segel nonlinearity. The former controls the Duhamel term uniformly in the critical norm,
while the latter provides the spatial smoothing and time integrability
needed to handle the quadratic nonlinearity. Together, these bounds allow the Duhamel
mapping to be closed in the solution space.} When \(a=r=\rho\), one has \(\sigma=2\), and the theorem yields the corresponding
time-refined maximal-regularity estimate. 

\subsection{The Keller--Segel equation and critical endpoint regularity}

We apply the preceding linear theory to the two-dimensional parabolic--elliptic
Keller--Segel equation
\begin{equation}
\label{eq:KS-intro}
\partial_tu-\Delta u
+
\nabla\cdot\left(u\nabla(-\Delta)^{-1}u\right)
=0,
\qquad
u|_{t=0}=u_0.
\end{equation}
The Keller--Segel model was introduced in \cite{KellerSegel1970} to
describe chemotactic aggregation and was later studied from a nonlinear
and collapse-oriented viewpoint in \cite{ChildressPercus1981}.  Its analysis reflects the competition
between diffusion and the attractive drift generated by the density itself.  The
classical theory includes local and global existence, critical-mass phenomena,
concentration, asymptotic behavior, and finite-time blow-up; see
\cite{Biler1998,Nagai1995,HerreroVelazquez1997,Horstmann2003,
BlanchetDolbeaultPerthame2006,BlanchetCarrilloMasmoudi2008,CalvezCorrias2008}
and the references therein.

{\color{blue}Equation~\eqref{eq:KS-intro} is invariant under the parabolic scaling
\[
u_\lambda(t,x)=\lambda^2u(\lambda^2t,\lambda x),
\qquad
u_{0,\lambda}(x)=\lambda^2u_0(\lambda x).
\]
This scaling naturally motivates the study of the Cauchy problem in
critical function spaces.  For the parabolic--parabolic system,
small-data global existence in critical spaces, global existence below
the critical mass, and mild solutions in scaling-invariant spaces were
studied in
\cite{CorriasPerthame2006,CalvezCorrias2008,KozonoSugiyamaWachi2012}.
For the parabolic--elliptic system, local strong solutions for initial
data in weak \(L^{n/2}\), together with small-data global existence and
blow-up-rate estimates, were established in
\cite{KozonoSugiyama2010}. In the homogeneous
Besov setting, Iwabuchi \cite{Iwabuchi2011} established small-data
global well-posedness together with an ill-posedness result, while Zhao
\cite{Zhao2018} proved local and small-data global well-posedness, as
well as Gevrey analyticity, for generalized Keller--Segel equations in
a broad range of critical Besov spaces.  Critical Besov--Morrey results
were later obtained by Nogayama and Sawano
\cite{NogayamaSawano2022}.  At the endpoint \(p=1\), Ogawa and Shimizu
\cite{OgawaShimizu2008} established local well-posedness for large data
and global well-posedness for small data for a two-dimensional
drift--diffusion system in the critical Hardy space
\(\mathcal H^1(\mathbb R^2)\).

For a homogeneous Triebel--Lizorkin norm in dimension two, the scaling-critical regularity at the spatial endpoint \(p=1\) is
\[
\dot F^0_{1,q}(\mathbb R^2),
\qquad
2\le q<\infty.
\]
For \(q=2\), this space coincides with
the Hardy space \(\mathcal H^1(\mathbb R^2)\). Therefore, what is not covered directly by these results is the larger endpoint Triebel--Lizorkin scale \(\dot F^0_{1,q}\) with \(q>2\), where the pointwise dyadic
summation is no longer compatible with the standard Bochner smoothing norm. This is
the gap addressed here.}

For \(T\in(0,\infty]\), define
\[
X_T^q
:=
\dot F^0_{1,q}(L^\infty_T)
\cap
\dot F^1_{1,q}(L^2_T).
\]
The choice of \(L^2_t\) is dictated by the quadratic structure of the drift: the heat
flow gains one derivative in \(L^2_t\), while Hölder's inequality gives
\[
L^2_t\times L^2_t\longrightarrow L^1_t.
\]
Thus the nonlinear term should be estimated in \(\dot F^0_{1,q}(L^1_T)\), precisely
the forcing space covered by the two off-diagonal consequences of
Theorem~\ref{thm:general-heat}.

The key nonlinear estimate is the following.

\begin{proposition}[Keller--Segel bilinear estimate]
\label{prop:bilinear}
Let \(2\le q<\infty\).  Then
\[
\left\|
\nabla\cdot\left(f\nabla(-\Delta)^{-1}g\right)
\right\|_{\dot F^0_{1,q}(L^1_T)}
\lesssim
\|f\|_{\dot F^1_{1,q}(L^2_T)}
\|g\|_{\dot F^1_{1,q}(L^2_T)}.
\]
Consequently,
\[
\left\|
\int_0^t e^{(t-\tau)\Delta}
\nabla\cdot\left(f\nabla(-\Delta)^{-1}g\right)(\tau)\,d\tau
\right\|_{X_T^q}
\lesssim
\|f\|_{\dot F^1_{1,q}(L^2_T)}
\|g\|_{\dot F^1_{1,q}(L^2_T)}.
\]
\end{proposition}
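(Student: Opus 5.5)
The second claim follows from the first. Apply the two off-diagonal consequences of Theorem~\ref{thm:general-heat}, namely $(a,r,s)=(1,\infty,0)$ and $(1,2,0)$, with $u_0=0$ and $F=\nabla\cdot(f\nabla(-\Delta)^{-1}g)$. This bounds the Duhamel term in $X_T^q$ by $\|F\|_{\dot F^0_{1,q}(L^1_T)}$. The whole task is therefore the bilinear bound.

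For the bilinear bound, write $v=\nabla(-\Delta)^{-1}g$. Since $\nabla(-\Delta)^{-1}$ is a homogeneous Fourier multiplier of order $-1$, the Banach-valued Peetre/almost-orthogonality machinery (Lemmas~\ref{lem:peetre-pointwise}, \ref{lem:almost}) should give $\|v\|_{\dot F^{s-1}_{1,q}(L^2_T)}\lesssim\|g\|_{\dot F^{s}_{1,q}(L^2_T)}$. More useful are pointwise blockwise bounds: $\dot\Delta_k v=2^{-k}\widetilde{\dot\Delta}_k g$ up to a kernel dominated by Peetre maximal functions. Then use Bony's decomposition
\[
f\,v=T_fv+T_vf+R(f,v),
\]
and estimate $\dot\Delta_j\nabla\cdot(\cdot)$, which contributes a factor $2^j$ on each output block. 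Hölder in time, $L^2_T\times L^2_T\to L^1_T$, is applied pointwise in $x$ before the dyadic summation. This is exactly where the time-refined ordering is used.

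\textbf{Paraproducts.} For $T_vf=\sum_j S_{j-1}v\,\dot\Delta_j f$, the problem is controlling the low-frequency factor $S_{j-1}v=\sum_{k<j}2^{-k}(\cdots)$, which is not summable. To avoid this, use the divergence structure. With $\nabla\cdot v=g$,
\[
\nabla\cdot(fv)=v\cdot\nabla f+fg .
\]
I would estimate the two pieces separately: $fg$ is a product of two $\dot F^1_{1,q}(L^2)$ functions, and $v\cdot\nabla f$ pairs $\nabla f\in\dot F^0_{1,q}(L^2)$ with $v$. For the paraproduct term in which the high frequency falls on $\nabla f$, I bound the low-frequency part of $v$ pointwise by $\sum_{k<j}2^{-k}M^*_k g$, where $M^*_k$ is a Peetre maximal function. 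The same term with the roles reversed then needs a Hardy-space-type endpoint bound.

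An easier route is to keep the form $2^j\dot\Delta_j(fv)$ and do the dyadic bookkeeping.
\begin{itemize}
\item In the high--low term $T_fv$, the output block is at frequency $2^k$, so $2^j\,\dot\Delta_k v\sim \widetilde{\dot\Delta}_k g$. It is then bounded by $\sup_{k'<k}|\dot\Delta_{k'}f|\cdot|\widetilde{\dot\Delta}_k g|$.
\item Symmetrically, $T_vf$ gives $2^k|\dot\Delta_k f|\cdot\sup_{k'<k}2^{-k'}|\dot\Delta_{k'}g|$-type products.
\end{itemize}
Integrating in $x$ and using Cauchy--Schwarz on the $\ell^q$ sum should reduce both to $L^2_x$-type quantities. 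The key point is that
\[
\dot F^1_{1,q}(\R^2)\hookrightarrow \dot F^0_{2,q'}\text{-like spaces}
\]
by a Jawerth-type embedding, announced as a Banach-valued Jawerth estimate. So each factor lies in a Triebel--Lizorkin space with $p=2$ at regularity $0$, and a $p=2$ product bound $L^2\cdot L^2\to L^1$ yields $\dot F^0_{1,q}$ control via the low-frequency maximal function. Here $q\ge2$ is used when passing from $\ell^2$ to $\ell^q$.

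\textbf{Main obstacle.} The hardest step is the resonant term $R(f,v)=\sum_{|k-k'|\le1}\dot\Delta_kf\,\dot\Delta_{k'}v$. Its output frequency $2^j\lesssim 2^k$ sums over all $k\ge j$, and at the $L^1$ endpoint there is no room to lose. The plan is to bound
\[
2^j|\dot\Delta_j\nabla\cdot R|\lesssim 2^{j}\sum_{k\ge j}2^{j-k}\,\mathcal M_j\bigl(\|\dot\Delta_kf\|_{L^2_T}\,2^{k}\|\dot\Delta_k v\|_{L^2_T}\bigr)
\]
using the extra gain from the divergence. Then sum the geometric factor $2^{j-k}$ in $\ell^q_j$, and control the Peetre/Jawerth maximal operators $\mathcal M_j$ via Lemma~\ref{lem:almost} with a small power $r<1$, to avoid the $L^1$ maximal inequality. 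The product $\sum_k 2^k\|\dot\Delta_kf\|_{L^2_T}\|\dot\Delta_k\widetilde g\|_{L^2_T}$ is then bounded in $L^1_x$ by Cauchy--Schwarz in $k$, which needs $\ell^2$ in $k$. Since $q>2$ only gives $\ell^q$, I would recover the missing summability through the Jawerth-type embedding into a $p=2$, $\ell^q$ space. I expect to spend most of the effort on this endpoint resonant estimate.
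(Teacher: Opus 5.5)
Your reduction of the Duhamel bound to the forcing bound, via Theorem~\ref{thm:general-heat} with $(a,r,s)=(1,\infty,0)$ and $(1,2,0)$, is exactly the paper's. The bilinear estimate, however, has a genuine gap at the low--high paraproduct $T_vf=\sum_k\dot S_{k-N}v\,\dot\Delta_kf$, with $v=\nabla(-\Delta)^{-1}g$ (the paper's $b$).

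You assert that the low-frequency velocity $\dot S_{k-N}v$ is not summable. You then replace it by $\sup_{k'<k}2^{-k'}|\dot\Delta_{k'}g|$ and reduce to $L^2_x$ quantities by Cauchy--Schwarz. This fails. A supremum does not dominate a sum. More importantly, any $L^2_x\times L^2_x$ splitting of $2^k\|\dot\Delta_kf\|_{L^2_T}\,\|\dot S_{k-N}v\|_{L^2_T}$ needs $(2^k\|\dot\Delta_kf\|_{L^2_T})_k\in L^2_x\ell^q_k$, i.e.\ $f\in\dot F^1_{2,q}(L^2_T)$. That space has the wrong scaling and is not implied by $f\in\dot F^1_{1,q}(L^2_T)$.

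This interaction must be estimated as $L^\infty_x\times L^1_x$. The missing idea is that the velocity series \emph{is} absolutely convergent. Lemma~\ref{lem:jawerth} gives $\sum_k\|\dot\Delta_kg\|_{L^2_TL^2_x}\lesssim\|g\|_{\dot F^1_{1,q}(L^2_T)}$. The order $-1$ of $\nabla(-\Delta)^{-1}$ exactly cancels the factor $2^k$ from Bernstein $L^2_x\to L^\infty_x$, so $\sum_k\|\dot\Delta_kv\|_{L^\infty_xL^2_T}\lesssim\|g\|_{\dot F^1_{1,q}(L^2_T)}$ (Lemma~\ref{lem:poisson}). Hence $\|\dot S_{k-N}v(\cdot,x)\|_{L^2_T}\lesssim\|v\|_{L^\infty_xL^2_T}$ uniformly in $k$ and $x$. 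Pointwise H\"older in time, together with Lemma~\ref{lem:peetre-multiplier}, Lemma~\ref{lem:almost} and Corollary~\ref{cor:peetre-sequence}, then leaves exactly $\|f\|_{\dot F^1_{1,q}(L^2_T)}$.

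You should also drop your other option of splitting $\nabla\cdot(fv)$ into $v\cdot\nabla f$ and $fg$. For real $f=g$ (the Keller--Segel case), $H=\int_0^Tf^2\,dt$ is a nonnegative $L^1_x$ function with positive integral. Then $\|\dot\Delta_jH\|_{L^1}\not\to0$ as $j\to-\infty$, so $H\notin\dot F^0_{1,q}$ and hence $f^2\notin\dot F^0_{1,q}(L^1_T)$ for $q<\infty$. The divergence form must be kept intact.

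For $T_fv$ and $R(f,v)$ your $L^2\times L^2$ instinct is right, but the mechanism is off in two places. First, Lemma~\ref{lem:jawerth} is the Banach-valued form of $\dot F^1_{1,q}\hookrightarrow\dot B^0_{2,1}$. It gives $\ell^1$ summability in the dyadic index for every $q$, so there is no $\ell^2$-versus-$\ell^q$ deficit to recover, and $q\ge2$ plays no role here.

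Second, your resonant plan via Peetre maximal functions and Lemma~\ref{lem:almost} does not apply. The blocks $\dot\Delta_kf\,\widetilde{\dot\Delta}_kv$ have spectra in balls rather than annuli, so Lemma~\ref{lem:almost} is unavailable. Bounding $\dot\Delta_j$ of a function of spectral scale $2^k$ by its Peetre maximal function at scale $2^k$ costs $2^{(k-j)M}$, far more than the single factor $2^{j-k}$ gained from the divergence. The plain maximal bound, in turn, runs into the $L^1$ failure you are trying to avoid.

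The paper instead places $\nabla\cdot T_fv$ and $\nabla\cdot R(f,v)$ in $\dot B^0_{1,1}(L^1_T)\hookrightarrow\dot F^0_{1,q}(L^1_T)$, which needs only the $L^1_x$-boundedness of $\dot\Delta_j$. For $R$ it sums $\sum_{j\le k+N_1}2^j\lesssim2^k$, then uses H\"older in $L^2_TL^2_x$. It then applies the Poisson gain $\|2^k\widetilde{\dot\Delta}_kv\|_{L^2_TL^2_x}\lesssim\|\widetilde{\dot\Delta}_kg\|_{L^2_TL^2_x}$ and Lemma~\ref{lem:jawerth} to both factors.
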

{\color{blue}This proposition shows that the Keller--Segel nonlinearity is closed
	within the time-refined critical framework: the quadratic drift belongs
	to the natural forcing space, and its Duhamel contribution lies in
	\(X_T^q\).  Together with the linear heat estimates, this provides the
	quadratic control required for the fixed-point argument and yields the
	following main theorem.} 

\begin{theorem}[Critical well-posedness for the Keller--Segel equation]
\label{thm:ks-classical}
Let \(2\le q<\infty\) and \(u_0\in\dot F^0_{1,q}(\mathbb R^2)\).  Then there exists
\(T>0\) such that equation~\eqref{eq:KS-intro} has a unique mild solution in the full
class
\[
u\in X_T^q.
\]
Moreover,
\[
u\in C\bigl([0,T];\dot F^0_{1,q}(\mathbb R^2)\bigr),
\]
and the data-to-solution map is continuous from a neighborhood of \(u_0\) in
\(\dot F^0_{1,q}\) into \(X_T^q\) on a common sufficiently small time interval.  There
exists \(\varepsilon_q>0\) such that, if
\[
\|u_0\|_{\dot F^0_{1,q}}\le\varepsilon_q,
\]
then the solution is global and belongs to \(X_\infty^q\).
\end{theorem}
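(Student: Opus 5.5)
The plan is a Banach fixed-point argument in $X_T^q$ for the mild formulation
\[
u=\Phi(u):=e^{t\Delta}u_0-B(u,u),\qquad B(f,g)(t):=\int_0^t e^{(t-\tau)\Delta}\nabla\cdot\bigl(f\nabla(-\Delta)^{-1}g\bigr)(\tau)\,d\tau .
\]
Proposition~\ref{prop:bilinear} gives
\[
\|B(f,g)\|_{X_T^q}\le C_1\|f\|_{\dot F^1_{1,q}(L^2_T)}\|g\|_{\dot F^1_{1,q}(L^2_T)}
\]
with $C_1$ independent of $T$. For the linear part, Theorem~\ref{thm:general-heat} with $F=0$, $s=0$, $a=1$ and $r=\infty$ or $r=2$ gives
\[
\|e^{t\Delta}u_0\|_{X_T^q}\le C_0\|u_0\|_{\dot F^0_{1,q}} .
\]
Define the two norms
\[
\|u\|_{Y_T}:=\|u\|_{\dot F^1_{1,q}(L^2_T)},\qquad \|u\|_{X_T^q}=\|u\|_{\dot F^0_{1,q}(L^\infty_T)}+\|u\|_{Y_T}.
\]
Only $\|\cdot\|_{Y_T}$ enters the right-hand side of the bilinear bound. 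The standard abstract lemma then applies: if $\|e^{t\Delta}u_0\|_{Y_T}\le \delta$ with $4C_1\delta<1$, the map $\Phi$ is a contraction on the ball
\[
\{u\in X_T^q:\ \|u\|_{Y_T}\le 2\delta,\ \|u\|_{X_T^q}\le 2C_0\|u_0\|+4C_1\delta^2\}.
\]
The Lipschitz estimate follows from bilinearity, via $B(u,u)-B(v,v)=B(u-v,u)+B(v,u-v)$. For small data, take $T=\infty$ and $\|u_0\|\le\varepsilon_q$ with $C_0\varepsilon_q\le\delta$; this gives the global solution in $X_\infty^q$.

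The key step for large data is to show $\|e^{t\Delta}u_0\|_{Y_T}\to0$ as $T\to0$. I would argue by density. Fix $\eta>0$ and split $u_0=v_0+w_0$ with $\|w_0\|_{\dot F^0_{1,q}}\le\eta$, where $v_0$ has Fourier support in a fixed annulus-union $\{2^{-M}\le|\xi|\le2^M\}$. Such a split exists because $q<\infty$: the $\ell^q$ tails $\sum_{|j|>M}|\dot\Delta_ju_0(x)|^q$ tend to $0$ pointwise, and dominated convergence in $L^1_x$ applies. The term $w_0$ contributes at most $C_0\eta$ by the linear estimate. For $v_0$, only finitely many blocks occur, and for each one
\[
\|\dot\Delta_je^{t\Delta}v_0(x)\|_{L^2(0,T)}\le T^{1/2}\sup_t|\dot\Delta_j e^{t\Delta}v_0(x)|.
\]
The sup is bounded by a Peetre/maximal quantity (Lemma~\ref{lem:peetre-pointwise}, Lemma~\ref{lem:almost}), which yields
\[
\|e^{t\Delta}v_0\|_{Y_T}\lesssim 2^{M}T^{1/2}\|v_0\|_{\dot F^0_{1,q}}\to0 .
\]
This is the step I expect to need the most care. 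The delicate point is making the decomposition and the Peetre bound uniform under small perturbations of $u_0$.

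Continuous dependence on a common interval follows from the same splitting. If $\|\tilde u_0-u_0\|_{\dot F^0_{1,q}}\le\eta$, then
\[
\|e^{t\Delta}\tilde u_0\|_{Y_T}\le\|e^{t\Delta}u_0\|_{Y_T}+C_0\eta\le\delta
\]
for the same $T$. The contraction then gives
\[
\|u-\tilde u\|_{X_T^q}\lesssim\|u_0-\tilde u_0\|_{\dot F^0_{1,q}} .
\]

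For uniqueness in the full class $X_T^q$, not just the ball, take two solutions $u,v$. Since $u,v\in\dot F^1_{1,q}(L^2_T)$ with $q<\infty$, dominated convergence gives $\|u\|_{Y_{T'}},\|v\|_{Y_{T'}}\to0$ as $T'\to0$. Applying the bilinear estimate to the difference on $[0,T']$ gives
\[
\|u-v\|_{Y_{T'}}\le C_1\bigl(\|u\|_{Y_{T'}}+\|v\|_{Y_{T'}}\bigr)\|u-v\|_{Y_{T'}},
\]
so $u=v$ on $[0,T']$. A continuation argument restarting from time $T'$ then extends this to all of $[0,T]$. The restart requires translation in time, which is handled by the semigroup property and the fact that $u(T')\in\dot F^0_{1,q}$.

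For time continuity, $e^{t\Delta}u_0\in C([0,T];\dot F^0_{1,q})$ follows from the same density argument combined with uniform boundedness of the semigroup. For the Duhamel term, I would approximate the forcing $F=\nabla\cdot(u\nabla(-\Delta)^{-1}u)\in\dot F^0_{1,q}(L^1_T)$ by forcings that are frequency-truncated and smooth in time. For these the continuity is elementary, and the $(1,\infty)$ estimate of Theorem~\ref{thm:general-heat} passes the uniform limit.
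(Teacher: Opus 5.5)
Your proof is correct. It shares the paper's overall architecture: a fixed-point ball with a bounded $\dot F^0_{1,q}(L^\infty_T)$ component and a small $\dot F^1_{1,q}(L^2_T)$ component, and the bilinear difference identity. It also has the same uniqueness argument (time localization and restarting the mild form), the same Duhamel continuity argument (approximate the forcing, pass to the limit with the $(a,r)=(1,\infty)$ estimate), and the same contraction on $X_\infty^q$ for small data.

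The genuine difference is in the key large-data step $\|e^{t\Delta}u_0\|_{\dot F^1_{1,q}(L^2_T)}\to0$.

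\emph{Paper's route.} Lemma~\ref{lem:small-time} works block by block with no splitting. The localized kernel gives $2^j\|\dot\Delta_je^{t\Delta}u_0(\cdot,x)\|_{L^2(0,T)}\lesssim(1-e^{-2cT2^{2j}})^{1/2}(\dot\Delta_ju_0)^*(x)$. The prefactor is at most $1$ and tends to $0$ for each fixed $j$. Dominated convergence in $\ell^q_j$ and then in $x$ concludes, with the Peetre sequence estimate supplying the majorant.

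\emph{Your route.} You split off finitely many frequencies, bound them crudely by $2^{M}T^{1/2}$, and control the tail by the uniform linear estimate. This gives an explicit rate on the finite-frequency part. The paper's route avoids estimating a truncated tail in $\dot F^0_{1,q}$.

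That tail estimate needs one more step than you give. For $|k|$ near $M$ one has $\dot\Delta_kw_0\neq\mathbf 1_{|k|>M}\dot\Delta_ku_0$. So the pointwise $\ell^q$-tail argument must go through Lemma~\ref{lem:almost} (finite overlap plus Peetre) to bound $\|w_0\|_{\dot F^0_{1,q}}$ by the tail of $(|\dot\Delta_ju_0|)_j$. This is routine.

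Two smaller remarks:
\begin{enumerate}
\item The uniformity under perturbations of $u_0$ that you flag as delicate does not arise. The triangle inequality with the linear bound, which you then use, is exactly how the paper gets a common existence time.
\item In the uniqueness continuation, the restart steps could shrink to zero. Either use a supremum argument, or, as the paper does, fix a finite partition in advance using the uniform-in-location smallness from Lemma~\ref{lem:continuity-localization}(ii).
\end{enumerate}
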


The same analytic framework applies to the two-component drift--diffusion system
\begin{equation}
\label{eq:drift-system-intro}
\left\{
\begin{aligned}
\partial_t v-\Delta v+\nabla\cdot(w\nabla\psi)&=0,\\
\partial_t w-\Delta w+\lambda w+\nabla\cdot(v\nabla\psi)&=0,\\
-\Delta\psi&=w,
\end{aligned}
\right.
\qquad
\lambda\ge0.
\end{equation}
Indeed, both nonlinearities are covered by Proposition~\ref{prop:bilinear}, and the
factor \(e^{-\lambda t}\) only improves the linear estimates.

\begin{corollary}[The drift--diffusion system]
\label{cor:drift-diffusion}
Let \(2\le q<\infty\) and
\[
(v_0,w_0)
\in
\dot F^0_{1,q}(\mathbb R^2)
\times
\dot F^0_{1,q}(\mathbb R^2).
\]
Then system~\eqref{eq:drift-system-intro} has a unique local mild solution
\[
(v,w)\in X_T^q\times X_T^q
\]
for some \(T>0\).  If
\[
\|v_0\|_{\dot F^0_{1,q}}
+
\|w_0\|_{\dot F^0_{1,q}}
\]
is sufficiently small, then the solution is global.
\end{corollary}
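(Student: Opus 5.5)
The plan is to run the same contraction argument as for Theorem~\ref{thm:ks-classical}, now on the product space $Y_T:=X_T^q\times X_T^q$ with norm $\|(v,w)\|_{Y_T}=\|v\|_{X_T^q}+\|w\|_{X_T^q}$. The mild formulation is
\[
v(t)=e^{t\Delta}v_0-\int_0^t e^{(t-\tau)\Delta}\nabla\cdot\bigl(w\nabla(-\Delta)^{-1}w\bigr)(\tau)\,d\tau,
\]
\[
w(t)=e^{-\lambda t}e^{t\Delta}w_0-\int_0^t e^{-\lambda(t-\tau)}e^{(t-\tau)\Delta}\nabla\cdot\bigl(v\nabla(-\Delta)^{-1}w\bigr)(\tau)\,d\tau,
\]
where we have used $\psi=(-\Delta)^{-1}w$. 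Both nonlinearities have the form $B(f,g)=\nabla\cdot(f\nabla(-\Delta)^{-1}g)$ with $(f,g)=(w,w)$ and $(v,w)$. So Proposition~\ref{prop:bilinear} gives
\[
\|B(f,g)\|_{\dot F^0_{1,q}(L^1_T)}\lesssim\|f\|_{\dot F^1_{1,q}(L^2_T)}\|g\|_{\dot F^1_{1,q}(L^2_T)},
\]
together with the Duhamel bound in $X_T^q$ for the $v$-equation.

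\textbf{The damping factor.} For the $w$-equation we must check that $e^{-\lambda(t-\tau)}$ preserves the estimates of Theorem~\ref{thm:general-heat}. The proof of that theorem is frequency-localized and then lifted via Lemmas~\ref{lem:peetre-pointwise} and~\ref{lem:almost}. At the dyadic level, it rests on kernel bounds of the form $\|\dot\Delta_j e^{t\Delta}h\|\lesssim e^{-c2^{2j}t}\|\dot\Delta_j h\|$ (pointwise through Peetre maximal functions), followed by Young's inequality in time. Multiplying by $e^{-\lambda t}\le1$ only decreases these time kernels, so every step goes through with constants independent of $\lambda\ge0$ and $T$.

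A cleaner alternative is the substitution $\tilde w=e^{\lambda t}w$. Since $e^{-\lambda t}$ is a scalar function of time commuting with $\dot\Delta_j$, the time-refined norm of $e^{-\lambda t}G$ is pointwise dominated by that of $G$. Concretely, $\|e^{-\lambda\cdot}\dot\Delta_j G(\cdot,x)\|_{L^\rho(0,T)}\le\|\dot\Delta_j G(\cdot,x)\|_{L^\rho(0,T)}$. However, the Duhamel damping $e^{-\lambda(t-\tau)}$ is not a pure function of $t$, so I would argue at the kernel level as above. This is the only genuinely new point, and I expect it to be the main (though mild) obstacle: one has to verify that the proof of Theorem~\ref{thm:general-heat} used only positivity and size bounds of the time kernel.

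\textbf{Contraction and uniqueness.} Write $\Phi(v,w)$ for the right-hand sides. By the linear estimates and bilinearity,
\[
\|\Phi(v,w)-\Phi(v',w')\|_{Y_T}\le C\bigl(\|(v,w)\|_{\dot F^1_{1,q}(L^2_T)}+\|(v',w')\|_{\dot F^1_{1,q}(L^2_T)}\bigr)\,\|(v-v',w-w')\|_{\dot F^1_{1,q}(L^2_T)}.
\]
For small data, we close in a ball of $Y_\infty$ of radius $2C_0\varepsilon$, using
\[
\|e^{t\Delta}u_0\|_{X_\infty^q}\le C_0\|u_0\|_{\dot F^0_{1,q}}.
\]
This bound is Theorem~\ref{thm:general-heat} with $(a,r,s)=(1,\infty,0)$ and $(1,2,0)$ and $F=0$.

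For large data, we argue exactly as in Theorem~\ref{thm:ks-classical}. The free part satisfies $\|e^{t\Delta}u_0\|_{\dot F^1_{1,q}(L^2_T)}\to0$ as $T\to0$. This follows by dominated convergence: each dyadic term $\|\dot\Delta_j e^{t\Delta}u_0(\cdot,x)\|_{L^2(0,T)}$ tends to $0$, and the $\ell^q_j$-sum is dominated by the $T=\infty$ quantity, which is integrable in $x$. We then run the contraction in the set where the $\dot F^1_{1,q}(L^2_T)$-part is small and the $\dot F^0_{1,q}(L^\infty_T)$-part is bounded by $2C_0\|(v_0,w_0)\|$. Since the bilinear bound involves only the $L^2_T$ components, the smallness closes the argument.

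Uniqueness in the full class $Y_T$ would follow as in Theorem~\ref{thm:ks-classical}. Given two solutions, shrink $T$ until both have small $\dot F^1_{1,q}(L^2_T)$ norm, again by dominated convergence in the time-refined norm, then apply the difference estimate and continue by a standard continuation argument.
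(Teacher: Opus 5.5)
Your proposal is correct and follows the paper's proof essentially step for step: a contraction on $X_T^q\times X_T^q$ with both nonlinearities handled by Proposition~\ref{prop:bilinear}, the observation that the factor $e^{-\lambda t}\le 1$ only shrinks the localized kernel bounds behind Theorem~\ref{thm:general-heat}, small-time smoothing by dominated convergence for large data, a ball in $X_\infty^q\times X_\infty^q$ for small data, and uniqueness in the full class by partitioning the time interval. As a minor aside, the substitution $\tilde w=e^{\lambda t}w$ actually removes the Duhamel damping entirely: the $w$-equation becomes undamped, and only a harmless factor $e^{-2\lambda t}$ appears in the $v$-forcing, so that route would also work; the kernel-level argument you chose is, however, exactly what the paper does.
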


\subsection{Main proof ideas and organization}

The linear argument begins with a frequency-localized heat-kernel estimate.  If
\(K_j\) denotes the heat kernel restricted to an annulus of size \(2^j\), then, for
every sufficiently large integer \(N\),
\[
|K_j(t,x)|
\lesssim_N
2^{2j}e^{-ct2^{2j}}(1+2^j|x|)^{-N}.
\]
The temporal decay produces the factor \(2^{-2j/b}\) in \(L^b_t\), while the spatial
convolution is controlled by the Banach-valued Peetre maximal estimate.  Young's
inequality in time then gives the off-diagonal gain in
Theorem~\ref{thm:general-heat}; the diagonal case yields maximal regularity.

For the nonlinear estimate, we use Bony's decomposition \cite{Bony1981}
\[
fb=T_bf+T_fb+R(f,b),
\qquad
b=\nabla(-\Delta)^{-1}g.
\]
The low--high term is treated pointwise in space by the Banach-valued Peetre estimate.
The reverse interaction and the comparable-frequency remainder are placed in
\(\dot B^0_{1,1}(L^1_T)\), using the Banach-valued Jawerth-type estimate to obtain
summability of the dyadic \(L^2_TL^2_x\)-blocks.  A dyadic order \(-1\) multiplier
argument gives the Poisson gain and defines the homogeneous velocity as an absolutely
convergent series in \(L^\infty_xL^2_T\).  These estimates close the Duhamel map in
\(X_T^q\).  Small-time smoothing of the free heat flow gives local existence for
arbitrary data, while time localization yields uniqueness in the full solution class.
The constants are independent of \(T\), which permits the global fixed-point argument
for sufficiently small initial data.

The remainder of the paper is organized as follows.  Section~2 introduces the
time-refined Triebel--Lizorkin spaces and establishes the Banach-valued Peetre and
frequency-summation estimates used at the endpoint.  Section~3 proves
Proposition~\ref{prop:failure-ordinary-heat}, establishes the localized heat-kernel
bounds, and proves Theorem~\ref{thm:general-heat} together with its
maximal-regularity consequence.  Section~4 develops the Banach-valued Bernstein and
multiplier estimates, the Jawerth-type estimate, and the
homogeneous Poisson reconstruction.  Section~5 proves
Proposition~\ref{prop:bilinear}, establishes the continuity and small-time smoothing
properties, and proves Theorem~\ref{thm:ks-classical}.  Finally, Section~6 proves
Corollary~\ref{cor:drift-diffusion}.

\section{Preliminaries and endpoint Peetre estimates}
\label{sec:prelim}

This section collects the harmonic-analysis tools that will be used
throughout the paper. Let $(\dot\Delta_j)_{j\in\Z}$ be a homogeneous Littlewood--Paley
decomposition on $\R^2$, and let
\[
\dot S_j f=\sum_{k<j}\dot\Delta_kf.
\]
All homogeneous spaces are understood in the usual quotient $\cS'(\R^2)/\mathcal P$
modulo polynomials.  For $1\le q<\infty$ and $s\in\R$, set
\[
\|f\|_{\dot F^s_{1,q}}
=
\left\|
\left(2^{js}|\dot\Delta_jf|\right)_{j\in\Z}
\right\|_{L^1_x\ell^q_j}.
\]
For an interval $I\subset\R$ and $1\le\rho\le\infty$, define
\[
\|u\|_{\dot F^s_{1,q}(L^\rho(I))}
=
\left\|
\left(2^{js}\|\dot\Delta_ju(\cdot,x)\|_{L^\rho(I)}\right)_{j\in\Z}
\right\|_{L^1_x\ell^q_j}.
\]
When $I=(0,T)$ we write $L_T^\rho$.  We use analogous notation for
$\dot B^s_{p,q}(L^\rho(I))$.

The first estimate is the Banach-valued version of the Peetre maximal
inequality.  Its proof uses scalarization rather than applying the scalar
inequality to $\|u\|_E$, which need not be frequency localized. 

\begin{lemma}[Banach-valued Peetre estimate]
\label{lem:peetre-pointwise}
Let $E$ be a Banach space, let $0<\theta<\infty$, and let $M>d/\theta$.
If $u:\R^d\to E$ has Fourier support in a fixed compact set, then
\[
\sup_{y\in\R^d}
\frac{\|u(x-y)\|_E}{(1+|y|)^M}
\lesssim
\bigl[\mathcal M(\|u\|_E^\theta)(x)\bigr]^{1/\theta}.
\]
Consequently, if
\[
\supp\widehat u_j\subset\{\xi:|\xi|\le C_0 2^j\},
\]
then, uniformly in $j$,
\[
\sup_{y\in\R^d}
\frac{\|u_j(x-y)\|_E}{(1+2^j|y|)^M}
\lesssim
\bigl[\mathcal M(\|u_j\|_E^\theta)(x)\bigr]^{1/\theta}.
\]
\end{lemma}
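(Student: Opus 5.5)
Following the hint in the text, we scalarize: by Hahn--Banach, for each fixed pair $(x,y)$ there is a functional $\Lambda\in E^*$ with $\|\Lambda\|_{E^*}\le1$ and $\Lambda(u(x-y))=\|u(x-y)\|_E$. The scalar function $\Lambda\circ u$ has Fourier support in the same compact set, but the scalar Peetre inequality would only bound it by $[\mathcal M(|\Lambda\circ u|^\theta)(x)]^{1/\theta}$. Since $|\Lambda\circ u|\le\|u\|_E$ pointwise and $\mathcal M$ is monotone, this is at most $[\mathcal M(\|u\|_E^\theta)(x)]^{1/\theta}$, uniformly in $\Lambda$. Taking the supremum over $y$ then gives the claim. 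The constant in the scalar inequality depends only on $d,\theta,M$ and the support set, not on the function, so the uniformity in $\Lambda$ is legitimate. Measurability issues for $E$-valued $u$ (we take $u$ strongly measurable, or pass to separable ranges) are routine.

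To be self-contained rather than quoting the scalar Peetre lemma as a black box, I would reprove it directly in the vector setting, which avoids any subtlety in whether scalar Peetre applies to $\Lambda\circ u$ with finite maximal function. Fix $\psi\in\cS$ with $\widehat\psi=1$ on the support set, so that $u=\psi*u$ and $\nabla u=(\nabla\psi)*u$. Define
\[
u^*(x)=\sup_y\|u(x-y)\|_E(1+|y|)^{-M},
\]
which is assumed finite; this follows from tempered growth, or by first using a larger $M'$ and a standard reduction. The mean value inequality, valid for Banach-valued $C^1$ functions, gives for $|z|\le\delta$
\[
\|u(x-y)\|_E\le\|u(x-y-z)\|_E+\delta\sup_{|w|\le\delta}\|\nabla u(x-y-w)\|_E .
\]
The gradient term is bounded using $\nabla u=(\nabla\psi)*u$ and the rapid decay of $\nabla\psi$, giving $\lesssim (1+|y|)^M u^*(x)$. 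For $\theta<1$, raise to the power $\theta$ and average over $|z|\le\delta$:
\[
\|u(x-y)\|_E^\theta\lesssim \delta^{-d}\int_{|z|\le\delta}\|u(x-y-z)\|_E^\theta\,dz+\delta^\theta(1+|y|)^{M\theta}u^*(x)^\theta .
\]
The average is $\lesssim\delta^{-d}(1+|y|)^d\mathcal M(\|u\|_E^\theta)(x)$. Dividing by $(1+|y|)^{M\theta}$, using $M\theta\ge d$, and taking the supremum over $y$ yields
\[
u^*(x)^\theta\le C\delta^{-d}\mathcal M(\|u\|_E^\theta)(x)+C\delta^\theta u^*(x)^\theta .
\]
Choosing $\delta$ small lets us absorb the last term. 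For $\theta\ge1$ the result follows from the case of any smaller $\theta'$ with $M>d/\theta'$, by Jensen: $[\mathcal M(g^{\theta'})]^{1/\theta'}\le[\mathcal M(g^\theta)]^{1/\theta}$.

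The $j$-dependent statement follows by rescaling: apply the above to $v(x)=u_j(2^{-j}x)$, whose Fourier support lies in $\{|\xi|\le C_0\}$, and note that $\mathcal M$ commutes with dilations. The main obstacle is the finiteness of $u^*$, which is needed for the absorption step. I would handle it by proving the inequality with $M$ replaced by a larger $M'$, for which finiteness follows from polynomial growth, and then observing that the resulting bound, together with the estimate for smaller $\theta$, recovers all $M>d/\theta$.
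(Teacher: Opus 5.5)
Your first paragraph is the paper's proof: test against functionals in the unit ball of $E^*$, apply the scalar Peetre inequality with a constant uniform in the functional, bound $|\Lambda\circ u|\le\|u\|_E$ inside $\mathcal M$, recover $\|u(x-y)\|_E$ by Hahn--Banach, and rescale for the dyadic statement. The paper also takes the scalar inequality as a black box, citing Guo--Li, so this paragraph alone is a complete proof at the paper's level.

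Your optional self-contained route is genuinely different and worth keeping. Running the mean-value/absorption argument directly on $\|u\|_E$ shows that band-limitation enters only through $u=\psi*u$ and $\nabla u=(\nabla\psi)*u$. So the fact that $\|u\|_E$ is not frequency localized causes no difficulty, and scalarization is not needed at all. However, the step you yourself call the main obstacle is not handled correctly.

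\textbf{The finiteness step.} For $M'>M$ we have $u^*_{M'}\le u^*_M$, so a bound on $u^*_{M'}$ controls the smaller quantity. Changing $\theta$ alters only the right-hand side and leaves the weight exponent untouched. No combination of these two monotonicities brings $M'$ back down to $M$. A working device is as follows.
\begin{itemize}
\item Take $M'$ at least the polynomial growth order of $u$ and $0<\epsilon\le1$. Set $w_\epsilon(y)=(1+|y|)^M(1+\epsilon|y|)^{M'-M}$ and $U_\epsilon(x)=\sup_y\|u(x-y)\|_E/w_\epsilon(y)$.
\item Then $U_\epsilon(x)\le\epsilon^{M-M'}u^*_{M'}(x)<\infty$, so absorption is legitimate.
\item Also $w_\epsilon(y+v)\le w_\epsilon(y)(1+|v|)^{M'}$ uniformly in $\epsilon$. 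Hence your gradient bound holds with a constant independent of $\epsilon$.
\item The averaged term still gains $w_\epsilon(y)^{-\theta}(1+|y|)^d\le1$.
\item Absorption then gives $U_\epsilon(x)\lesssim[\mathcal M(\|u\|_E^\theta)(x)]^{1/\theta}$ uniformly in $\epsilon$. Since $U_\epsilon\uparrow u^*_M$ as $\epsilon\downarrow0$, monotone convergence finishes the proof.
\end{itemize}
Alternatively, use Triebel's approximation $\psi(\epsilon x)u(x)$ together with Fatou's lemma.

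\textbf{The case $\theta\ge1$.} Your reduction to some $\theta'<1$ with $M>d/\theta'$ is impossible when $d/\theta<M\le d$. Instead, run the same absorption argument directly for $\theta\ge1$, using $(a+b)^\theta\le2^{\theta-1}(a^\theta+b^\theta)$.
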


\begin{proof}
We first prove the assertion at a fixed frequency scale.  Let
$\ell\in E^*$ with $\|\ell\|_{E^*}\le1$ and define the scalar function
\[
u_\ell(x)=\ell(u(x)).
\]
Since $\ell$ is continuous and linear, scalarization does not enlarge Fourier
support:
\[
\supp\widehat{u_\ell}\subset\supp\widehat u.
\]
The scalar Peetre maximal inequality (see Lemma 2.3 in \cite{GuoLi2021}), with the same compact frequency set,
therefore gives
\begin{equation}
\label{eq:scalar-peetre}
\sup_{y\in\R^d}
\frac{|u_\ell(x-y)|}{(1+|y|)^M}
\lesssim
\bigl[\mathcal M(|u_\ell|^\theta)(x)\bigr]^{1/\theta}.
\end{equation}
The implicit constant is uniform for $\ell$ in the unit ball of $E^*$.  Since
\[
|u_\ell(z)|\le \|u(z)\|_E,
\]
monotonicity of the Hardy--Littlewood maximal operator yields
\[
\mathcal M(|u_\ell|^\theta)(x)
\le
\mathcal M(\|u\|_E^\theta)(x).
\]
Combining \eqref{eq:scalar-peetre} with the preceding maximal-function
comparison, we obtain
\begin{equation}
\label{eq:scalarized-peetre-majorant}
\sup_y
\frac{|\ell(u(x-y))|}{(1+|y|)^M}
\lesssim
\bigl[\mathcal M(\|u\|_E^\theta)(x)\bigr]^{1/\theta}.
\end{equation}
Taking the supremum in \eqref{eq:scalarized-peetre-majorant} over all
$\ell\in E^*$ with $\|\ell\|_{E^*}\le1$ and using the Hahn--Banach identity
\[
\|v\|_E=\sup_{\|\ell\|_{E^*}\le1}|\ell(v)|
\]
gives
\begin{equation}
\label{eq:banach-peetre-fixed}
\sup_y
\frac{\|u(x-y)\|_E}{(1+|y|)^M}
\lesssim
\bigl[\mathcal M(\|u\|_E^\theta)(x)\bigr]^{1/\theta}.
\end{equation}

We now pass to the dyadic form.  Set
\[
v_j(X)=u_j(2^{-j}X).
\]
Then $\widehat v_j$ is supported in a fixed ball independent of $j$.  Applying
the fixed-scale estimate to $v_j$ at $X=2^jx$ gives
\[
\sup_{Z\in\R^d}
\frac{\|v_j(2^jx-Z)\|_E}{(1+|Z|)^M}
\lesssim
\bigl[\mathcal M(\|v_j\|_E^\theta)(2^jx)\bigr]^{1/\theta}.
\]
With $Z=2^jy$, the left-hand side becomes the desired dyadic Peetre
maximal function.  Moreover, the maximal operator respects this scaling:
\begin{equation}
\label{eq:peetre-maximal-scaling}
\mathcal M(\|v_j\|_E^\theta)(2^jx)
=
\mathcal M(\|u_j\|_E^\theta)(x).
\end{equation}
Applying \eqref{eq:banach-peetre-fixed} to $v_j$ and then using
\eqref{eq:peetre-maximal-scaling} proves the second assertion uniformly in
$j$.
\end{proof}

Fix henceforth
\[
0<\theta<\min\{1,q\},
\qquad
M>\frac{2}{\theta}.
\]
For an $E$-valued function localized at scale $2^j$, define
\[
u_j^*(x):=
\sup_{y\in\R^2}
\frac{\|u_j(x-y)\|_E}{(1+2^j|y|)^M}.
\]
The pointwise estimate above becomes useful at the spatial endpoint only
after the dyadic family is summed.  Taking a small auxiliary power allows us
to apply the Fefferman--Stein inequality at exponents strictly larger than
one, which gives the following sequence estimate.

\begin{corollary} 
\label{cor:peetre-sequence}
Let $1\le q<\infty$ and let $E$ be a Banach space.  Suppose
$\supp\widehat u_j\subset\{|\xi|\le C2^j\}$.  Then
\[
\|(u_j^*)_j\|_{L^1_x\ell^q_j}
\lesssim
\|\bigl(\|u_j(x)\|_E\bigr)_j\|_{L^1_x\ell^q_j}.
\]
\end{corollary}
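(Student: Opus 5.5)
The plan is to combine the pointwise dyadic estimate of Lemma~\ref{lem:peetre-pointwise} with the Fefferman--Stein vector-valued maximal inequality, applied after raising to the auxiliary power $\theta$. With $\theta$ and $M$ fixed as above, $0<\theta<\min\{1,q\}$ and $M>2/\theta$. The second assertion of Lemma~\ref{lem:peetre-pointwise} (with $d=2$ and $C_0=C$) gives, uniformly in $j$ and pointwise in $x$,
\[
u_j^*(x)\lesssim \bigl[\mathcal M(\|u_j\|_E^\theta)(x)\bigr]^{1/\theta}.
\]
Set $g_j(x):=\|u_j(x)\|_E^\theta$. These are nonnegative scalar functions, so the maximal operator acts on them in the usual sense. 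No frequency localization of $g_j$ is needed, since the Peetre lemma has already used the localization of $u_j$.

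Next I rewrite the target norm in terms of $g_j$. Put $p_0:=1/\theta>1$ and $q_0:=q/\theta>1$. Then
\[
\|(u_j^*)_j\|_{L^1_x\ell^q_j}
\lesssim
\Bigl\|\Bigl(\sum_j [\mathcal M g_j]^{q/\theta}\Bigr)^{\theta/q}\Bigr\|_{L^{1/\theta}_x}^{1/\theta}
=
\|(\mathcal M g_j)_j\|_{L^{p_0}_x\ell^{q_0}_j}^{1/\theta}.
\]
Since $1<p_0<\infty$ and $1<q_0<\infty$ (using $q<\infty$), the Fefferman--Stein inequality \cite{FeffermanStein1971} gives
\[
\|(\mathcal M g_j)_j\|_{L^{p_0}\ell^{q_0}}\lesssim \|(g_j)_j\|_{L^{p_0}\ell^{q_0}}.
\]
Undoing the powers, the right-hand side equals $\|(\|u_j\|_E)_j\|_{L^1_x\ell^q_j}^{\theta}$. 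Raising both sides to the power $1/\theta$ then yields the claim.

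The only point needing care is the bookkeeping of exponents, namely the identity
\[
\|(h_j^{1/\theta})\|_{L^1\ell^q}=\|(h_j)\|_{L^{1/\theta}\ell^{q/\theta}}^{1/\theta}
\]
for nonnegative $h_j$, together with the requirement that both exponents be strictly greater than one. That requirement is exactly why we need $\theta<\min\{1,q\}$. I do not expect a genuine obstacle. One should also note measurability of $x\mapsto\|u_j(x)\|_E$ (the $u_j$ being strongly measurable, or at least having measurable norms). Finiteness of the right-hand side may be assumed, since otherwise there is nothing to prove.
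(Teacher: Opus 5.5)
Your proposal is correct and matches the paper's proof: the pointwise bound $u_j^*\lesssim[\mathcal M(\|u_j\|_E^\theta)]^{1/\theta}$ from Lemma~\ref{lem:peetre-pointwise}, the mixed-norm identity after raising to the power $\theta$, the Fefferman--Stein inequality at exponents $1/\theta>1$ and $q/\theta>1$, and then undoing the power. Your added remarks on measurability of $x\mapsto\|u_j(x)\|_E$ and on assuming a finite right-hand side are harmless supplements.
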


\begin{proof}
By Lemma~\ref{lem:peetre-pointwise},
\begin{equation}
\label{eq:peetre-power-pointwise}
(u_j^*(x))^\theta
\lesssim
\mathcal M(\|u_j\|_E^\theta)(x).
\end{equation}
Using the elementary identity between mixed norms after taking the
$\theta$-th power, \eqref{eq:peetre-power-pointwise} yields
\begin{align}
\label{eq:peetre-powered-mixed}
\|(u_j^*)_j\|_{L^1_x\ell^q_j}^{\theta}
&=
\|((u_j^*)^\theta)_j\|_{L^{1/\theta}_x\ell^{q/\theta}_j}\\
&\lesssim
\|(\mathcal M(\|u_j\|_E^\theta))_j
  \|_{L^{1/\theta}_x\ell^{q/\theta}_j}.
\end{align}
Our choice $0<\theta<\min\{1,q\}$ guarantees
\[
\frac1\theta>1,
\qquad
\frac q\theta>1.
\]
The Fefferman--Stein vector-valued maximal inequality is therefore
applicable and yields
\begin{equation}
\label{eq:peetre-fefferman-stein}
\|(\mathcal M(\|u_j\|_E^\theta))_j
  \|_{L^{1/\theta}_x\ell^{q/\theta}_j}
\lesssim
\|(\|u_j\|_E^\theta)_j
  \|_{L^{1/\theta}_x\ell^{q/\theta}_j}.
\end{equation}
Substituting \eqref{eq:peetre-fefferman-stein} into
\eqref{eq:peetre-powered-mixed}, the last expression equals
\[
\|\bigl(\|u_j(x)\|_E\bigr)_j\|_{L^1_x\ell^q_j}^{\theta}.
\]
Taking the $\theta$-th root completes the proof.
\end{proof}

We next combine the sequence-valued Peetre bound with the finite overlap of
annular Fourier supports.  The resulting lemma is a convenient reconstruction
estimate for dyadic sums and will be used in the low--high paraproduct
analysis.

\begin{lemma}
\label{lem:almost}
Let $1\le q<\infty$, let $E$ be a Banach space, and suppose
\[
\supp\widehat u_k
\subset
\{\xi:c2^k\le|\xi|\le C2^k\}.
\]
Then  
\[
\left\|\sum_k u_k\right\|_{\dot F^0_{1,q}(\R^2;E)}
\lesssim
\|(u_k^*)_k\|_{L^1_x\ell^q_k}
\lesssim
\|\bigl(\|u_k(x)\|_E\bigr)_k\|_{L^1_x\ell^q_k}.
\]
\end{lemma}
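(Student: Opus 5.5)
The second inequality is exactly Corollary~\ref{cor:peetre-sequence}, since annular support is contained in $\{|\xi|\le C2^k\}$. So the work is the first inequality. Set $u=\sum_k u_k$. By the annular support, there is a fixed integer $N_0$, depending only on $c$, $C$ and the Littlewood--Paley cutoff, such that
\[
\dot\Delta_j u=\sum_{|k-j|\le N_0}\dot\Delta_j u_k .
\]
The sum defining $u$ will be understood in $\cS'(\R^2;E)$ modulo polynomials. Convergence will follow a posteriori from the bound, applied first to finite sums and then by a limiting argument.

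Next I estimate each block pointwise. Write $\dot\Delta_j u_k=\varphi_j*u_k$ with $\varphi_j=2^{2j}\varphi(2^j\cdot)$ Schwartz. For $|k-j|\le N_0$,
\[
\|\dot\Delta_j u_k(x)\|_E\le\int|\varphi_j(y)|\,\|u_k(x-y)\|_E\,dy
\le u_k^*(x)\int 2^{2j}|\varphi(2^jy)|(1+2^k|y|)^M\,dy .
\]
Since $2^k\le 2^{N_0}2^j$, the last integral is bounded by $C_{N_0,M}\int|\varphi(z)|(1+|z|)^M\,dz<\infty$. Hence
\[
\|\dot\Delta_j u(x)\|_E\lesssim\sum_{|i|\le N_0}u_{j+i}^*(x).
\]
Taking $\ell^q_j$ and using the triangle inequality over the finitely many shifts $i$ gives
\[
\bigl\|(\|\dot\Delta_j u(x)\|_E)_j\bigr\|_{\ell^q_j}\lesssim(2N_0+1)\,\|(u_k^*(x))_k\|_{\ell^q_k}.
\]
Integrating in $x$ then yields the first inequality, since the $\dot F^0_{1,q}(\R^2;E)$ norm is the $L^1_x\ell^q_j$ norm of $\|\dot\Delta_j u\|_E$.

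I expect the main subtlety to be the meaning of $\sum_k u_k$ as an $E$-valued tempered distribution modulo polynomials when infinitely many $k$ are present. I would first prove the estimate for finite sums, where the argument above is rigorous. I would then pass to the limit using the fact that the partial sums form a Cauchy sequence in $\dot F^0_{1,q}(\R^2;E)$, which follows by applying the same bound to tails. Everything else is routine: the Peetre function absorbs the convolution kernel at comparable scales, and the finite overlap of supports costs only a constant.
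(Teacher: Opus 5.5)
Your proposal is correct and follows essentially the same route as the paper: the finite frequency overlap $\dot\Delta_j u_k=0$ for $|j-k|>N_0$, the pointwise bound $\|\dot\Delta_j u_k(x)\|_E\lesssim u_k^*(x)$ obtained by absorbing the rescaled Schwartz kernel into the Peetre weight at comparable scales, boundedness of the finite shift on $\ell^q$, and Corollary~\ref{cor:peetre-sequence} for the second inequality. Your extra step of defining $\sum_k u_k$ through finite partial sums and a Cauchy argument on the tails addresses a point the paper leaves implicit, and it is a reasonable way to make the statement rigorous.
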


\begin{proof}
  Since the Fourier supports of $u_k$ are
contained in annuli of size $2^k$, there is an integer $N_0$, depending only
on the Littlewood--Paley decomposition and on $c,C$, such that
\begin{equation}
\label{eq:finite-frequency-overlap}
\dot\Delta_j u_k=0
\qquad\text{whenever }|j-k|>N_0.
\end{equation}
Consequently,
\begin{equation}
\label{eq:finite-frequency-reconstruction}
\dot\Delta_j\Bigl(\sum_k u_k\Bigr)
=
\sum_{|k-j|\le N_0}\dot\Delta_j u_k.
\end{equation}
Let $K_j$ denote the convolution kernel of $\dot\Delta_j$.  By definition of
$u_k^*$,
\[
\|u_k(x-y)\|_E
\le
u_k^*(x)(1+2^k|y|)^M.
\]
Therefore, for $|j-k|\le N_0$,
\[
\begin{aligned}
\|\dot\Delta_j u_k(x)\|_E
&\le
\int_{\R^2}|K_j(y)|\,\|u_k(x-y)\|_E\,dy\\
&\le
u_k^*(x)
\int_{\R^2}|K_j(y)|(1+2^k|y|)^M\,dy.
\end{aligned}
\]
Since $2^j\simeq2^k$ in this range and
$K_j(y)=2^{2j}K(2^jy)$ with $K\in\cS$, the weighted integral is bounded
uniformly in $j$ and $k$.  Thus
\begin{equation}
\label{eq:localized-block-peetre}
\|\dot\Delta_j u_k(x)\|_E\lesssim u_k^*(x),
\qquad |j-k|\le N_0.
\end{equation}
Combining \eqref{eq:finite-frequency-reconstruction} and
\eqref{eq:localized-block-peetre}, we find
\begin{equation}
\label{eq:finite-convolution-pointwise}
\left(\sum_j
\left\|\dot\Delta_j\sum_k u_k(x)\right\|_E^q
\right)^{1/q}
\lesssim
\left(
\sum_j
\left(\sum_{|k-j|\le N_0}u_k^*(x)\right)^q
\right)^{1/q}.
\end{equation}
The finite convolution in the sequence variable is bounded on $\ell^q$.
Integrating \eqref{eq:finite-convolution-pointwise} in $x$ therefore gives
\begin{equation}
\label{eq:finite-synthesis-estimate}
\left\|\sum_k u_k\right\|_{\dot F^0_{1,q}(E)}
\lesssim
\|(u_k^*)_k\|_{L^1_x\ell^q_k}.
\end{equation}
The second inequality in the statement follows from
Corollary~\ref{cor:peetre-sequence}.

\end{proof}

\section{Heat-flow estimates}
\label{sec:heat}

This section establishes the linear estimates underlying the nonlinear
application.  We first prove that the standard Bochner smoothing estimate
fails for $q>2$.  We then derive a localized heat-kernel bound and combine it
with the Peetre estimates from Section~\ref{sec:prelim} to prove the
time-refined smoothing theorem and its diagonal maximal-regularity
consequence.

\subsection{Proof of the ordinary-space obstruction}

We begin with the counterexample announced in the Introduction.  Its purpose
is to isolate the incompatibility between the dyadic heat time scales and the
order of summation in the standard Bochner norm.

\begin{proof}[Proof of Proposition~\ref{prop:failure-ordinary-heat}]
Choose a nonzero $\phi\in\cS(\R^2)$ whose Fourier support is contained in a
sufficiently small ball.  Fix $\xi_0\ne0$ and set
\[
f_k(x)=e^{i2^k\xi_0\cdot x}\phi(x),
\]
with the Littlewood--Paley cutoffs chosen so that $\dot\Delta_kf_k=f_k$ for all
large $k$.  Let
\[
k_m=K+mL,
\qquad m=1,\dots,N,
\]
where $L$ is large enough to separate the annuli and $K$ is large enough for
the intervals below to lie in $(0,T)$.  Set
\begin{equation}
\label{eq:counterexample-data}
u_0^N=\sum_{m=1}^Nf_{k_m}.
\end{equation}
Since $|f_{k_m}|=|\phi|$ and the frequency blocks are separated,
\begin{equation}
\label{eq:counterexample-input}
\|u_0^N\|_{\dot F^0_{1,q}}
\sim
N^{1/q}\|\phi\|_{L^1}.
\end{equation}
Moreover, $\|\Delta f_k\|_{L^1}\lesssim2^{2k}$, and the $L^1$-contractivity of
the heat semigroup gives
\[
\|e^{t\Delta}f_k-f_k\|_{L^1}
\le
\int_0^t\|e^{s\Delta}\Delta f_k\|_{L^1}\,ds
\le
t\|\Delta f_k\|_{L^1}.
\]
Thus there are $c_0,c>0$ such that
\begin{equation}
\label{eq:heat-block-lower-bound}
\|e^{t\Delta}f_k\|_{L^1}\ge c
\qquad
(0<t\le c_0 2^{-2k}).
\end{equation}
Define
\[
I_m=
\left[\frac{c_0}{2}2^{-2k_m},c_02^{-2k_m}\right].
\]
After increasing $L$, these intervals are pairwise disjoint.  For $t\in I_m$, \eqref{eq:heat-block-lower-bound} gives
\begin{equation}
\label{eq:counterexample-pointwise-output}
\|e^{t\Delta}u_0^N\|_{\dot F^1_{1,q}}
\ge
2^{k_m}\|e^{t\Delta}f_{k_m}\|_{L^1}
\gtrsim2^{k_m}.
\end{equation}
Because the intervals $I_m$ are pairwise disjoint, integrating
\eqref{eq:counterexample-pointwise-output} yields
\begin{align}
\label{eq:counterexample-output}
\|e^{t\Delta}u_0^N\|_{L^2(0,T;\dot F^1_{1,q})}^2
&\ge
\sum_{m=1}^N\int_{I_m}
\|e^{t\Delta}u_0^N\|_{\dot F^1_{1,q}}^2\,dt\\
&\gtrsim
\sum_{m=1}^N2^{2k_m}|I_m|
\gtrsim N.
\end{align}
Comparing \eqref{eq:counterexample-input} and
\eqref{eq:counterexample-output}, the asserted estimate would imply
$N^{1/2}\lesssim N^{1/q}$ for all $N$, a contradiction when $q>2$.
\end{proof}

\subsection{Proof of the generalized heat estimate}

The positive theory begins with a dyadic kernel estimate that records both
the exponential decay on the parabolic time scale and rapid spatial decay.
This estimate will permit the time norm to be taken at each frequency before
the dyadic summation.

\begin{lemma}[Localized heat kernel]
\label{lem:localized-heat-kernel}
Let
\[
K_j(t,x)
=
\cF^{-1}\left[e^{-t|\xi|^2}\widetilde\varphi(2^{-j}\xi)\right](x),
\]
where $\widetilde\varphi\in C_c^\infty(\R^2\setminus\{0\})$ equals one on the
support of the Littlewood--Paley bump.  For every integer $N\ge0$, there are
constants $C_N,c>0$, independent of $j$ and $t$, such that
\[
|K_j(t,x)|
\le
C_N2^{2j}e^{-ct2^{2j}}(1+2^j|x|)^{-N}.
\]
Consequently, if
\[
L_j(x)=2^{2j}(1+2^j|x|)^{-N},
\]
then for $1\le b\le\infty$,
\[
\|K_j(\cdot,x)\|_{L^b(0,\infty)}
\lesssim
2^{-2j/b}L_j(x).
\]
\end{lemma}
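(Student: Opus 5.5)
The plan is to reduce by scaling to $j=0$ and then integrate by parts in $\xi$ to get spatial decay. Since $\widetilde\varphi(2^{-j}\xi)$ is a dilation, the change of variables $\xi=2^j\eta$ gives
\[
K_j(t,x)=2^{2j}K_0(2^{2j}t,2^jx),
\qquad
K_0(s,X)=\int_{\R^2}e^{iX\cdot\eta}e^{-s|\eta|^2}\widetilde\varphi(\eta)\,d\eta
\]
(up to the normalizing constant of $\cF^{-1}$). It therefore suffices to prove
\[
|K_0(s,X)|\le C_Ne^{-cs}(1+|X|)^{-N}
\qquad\text{uniformly in } s\ge0,
\]
since rescaling then gives exactly $C_N2^{2j}e^{-ct2^{2j}}(1+2^j|x|)^{-N}$.

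For the bound on $K_0$, fix the annulus $\supp\widetilde\varphi\subset\{r_1\le|\eta|\le r_2\}$ with $r_1>0$. For $|X|\le1$, bound the integrand trivially by $e^{-sr_1^2}|\widetilde\varphi|$, which gives $e^{-cs}$ with $c=r_1^2$. For $|X|\ge1$, integrate by parts $N$ times using $(-|X|^{-2}X\cdot\nabla_\eta)^N e^{iX\cdot\eta}=e^{iX\cdot\eta}$, which yields a factor $|X|^{-N}$ times $\partial_\eta^\alpha\bigl(e^{-s|\eta|^2}\widetilde\varphi(\eta)\bigr)$ with $|\alpha|\le N$. By Leibniz, each derivative of $e^{-s|\eta|^2}$ produces a polynomial in $s$ and $\eta$ of degree at most $|\alpha|$ in $s$, times $e^{-s|\eta|^2}$. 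On the annulus this is bounded by $C(1+s)^{N}e^{-sr_1^2}\le C'_Ne^{-sr_1^2/2}$. Hence $c=r_1^2/2$ works for all $N$, and the constants are independent of $s$ and hence of $t$ and $j$.

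This step is the only real obstacle: we must keep the polynomial growth in $s$ from the derivatives absorbed into the exponential, uniformly. Sacrificing half the exponent handles it, and $c$ can be chosen independent of $N$.

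For the consequence, fix $x$ and take the $L^b(0,\infty)$ norm in $t$ of the pointwise bound:
\[
\|K_j(\cdot,x)\|_{L^b_t}\le C_N2^{2j}(1+2^j|x|)^{-N}\,\|e^{-ct2^{2j}}\|_{L^b(0,\infty)}.
\]
For $b<\infty$ we have $\|e^{-ct2^{2j}}\|_{L^b(0,\infty)}=(cb\,2^{2j})^{-1/b}\lesssim2^{-2j/b}$, and for $b=\infty$ the norm is $1=2^{0}$. This gives $\|K_j(\cdot,x)\|_{L^b(0,\infty)}\lesssim2^{-2j/b}L_j(x)$ uniformly in $j$ and $x$, with a constant depending only on $N$ and $b$ (it is bounded for $b\ge1$).
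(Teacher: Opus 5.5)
Your proof is correct and follows essentially the same route as the paper: rescale to $K_j(t,x)=2^{2j}K(2^{2j}t,2^jx)$, integrate by parts in $\eta$ to get the factor $(1+|X|)^{-N}$, and absorb the polynomial-in-$s$ factors coming from derivatives of $e^{-s|\eta|^2}$ by giving up part of the exponent. The paper does this absorption up front, factoring out $e^{-cs}$ and noting that $e^{-s(|\eta|^2-c)}\widetilde\varphi$ is bounded in every $C^L$ uniformly in $s$, which is the same estimate. One cosmetic slip: the operator that reproduces the phase is $-i|X|^{-2}X\cdot\nabla_\eta$, not $-|X|^{-2}X\cdot\nabla_\eta$ (yours gives $(-i)^Ne^{iX\cdot\eta}$); this is harmless since only moduli are estimated.
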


\begin{proof}
Scaling gives
\begin{equation}
\label{eq:localized-kernel-scaling}
K_j(t,x)=2^{2j}K(t2^{2j},2^jx),
\qquad
K(s,y)=\cF^{-1}
\left[e^{-s|\eta|^2}\widetilde\varphi(\eta)\right](y).
\end{equation}
Because $\widetilde\varphi$ is supported in a compact annulus, there exists
$c>0$ such that $|\eta|^2\ge2c$ on its support.  We write
\[
e^{-s|\eta|^2}\widetilde\varphi(\eta)
=
e^{-cs}m_s(\eta),
\qquad
m_s(\eta)=e^{-s(|\eta|^2-c)}\widetilde\varphi(\eta).
\]
For every multi-index $\alpha$, differentiation of $m_s$ produces a finite
sum of terms consisting of a polynomial in $s$ multiplied by
$e^{-s(|\eta|^2-c)}$ and a smooth compactly supported function.  Since
$|\eta|^2-c\ge c$ on the support, the elementary bounds
\[
\sup_{s\ge0}s^m e^{-cs}<\infty
\qquad(m\ge0)
\]
show that $(m_s)_{s\ge0}$ is bounded in $C_c^L$ for every $L$.  Repeated
integration by parts in the inverse Fourier transform therefore yields,
uniformly in $s\ge0$,
\begin{equation}
\label{eq:localized-symbol-decay}
|\cF^{-1}m_s(y)|
\le C_N(1+|y|)^{-N}.
\end{equation}
Consequently,
\begin{equation}
\label{eq:unit-kernel-decay}
|K(s,y)|
\le
C_Ne^{-cs}(1+|y|)^{-N}.
\end{equation}
Combining \eqref{eq:localized-kernel-scaling} and
\eqref{eq:unit-kernel-decay} proves
\begin{equation}
\label{eq:localized-kernel-decay}
|K_j(t,x)|
\le
C_N2^{2j}e^{-ct2^{2j}}(1+2^j|x|)^{-N}.
\end{equation}
For $1\le b<\infty$, \eqref{eq:localized-kernel-decay} gives
\begin{align}
\label{eq:localized-kernel-time-norm}
\|K_j(\cdot,x)\|_{L^b(0,\infty)}
&\lesssim
L_j(x)
\left(\int_0^\infty e^{-cbt2^{2j}}\,dt\right)^{1/b}\\
&\lesssim
2^{-2j/b}L_j(x).
\end{align}
The case $b=\infty$ follows directly from
\eqref{eq:localized-kernel-decay}.
\end{proof}

With the localized kernel bound in hand, we treat the free evolution and the
Duhamel term separately.  In both cases the spatial convolution is reduced to
a Banach-valued Peetre maximal function, while Young's inequality in time
determines the gain of regularity.

\begin{proof}[Proof of Theorem~\ref{thm:general-heat}]
Choose parameters
\[
0<\theta<\min\{1,q\},
\qquad
M>\frac2\theta,
\qquad
N>M+2,
\]
and we split
\begin{equation}
\label{eq:heat-solution-decomposition}
u=u^{\mathrm{lin}}+u^{\mathrm{duh}},
\end{equation}
where
\begin{equation}
\label{eq:heat-linear-duhamel-parts}
u^{\mathrm{lin}}(t)=e^{t\Delta}u_0,
\qquad
u^{\mathrm{duh}}(t)=
\int_0^t e^{(t-\tau)\Delta}F(\tau)\,d\tau.
\end{equation}
For the free heat flow, insert a cutoff equal to one on the support of
$\dot\Delta_j$ and use Lemma~\ref{lem:localized-heat-kernel}:
\[
\dot\Delta_j e^{t\Delta}u_0(x)
=
\int_{\R^2}K_j(t,y)\dot\Delta_ju_0(x-y)\,dy.
\]
Minkowski's inequality in time and
\eqref{eq:localized-kernel-time-norm} give
\begin{align}
\label{eq:free-block-time-bound}
\|\dot\Delta_j e^{t\Delta}u_0(\cdot,x)\|_{L^r_T}
&\le
\int_{\R^2}
\|K_j(\cdot,y)\|_{L^r(0,T)}
|\dot\Delta_ju_0(x-y)|\,dy\\
&\lesssim
2^{-2j/r}L_j*|\dot\Delta_ju_0|(x).
\end{align}
Here and below $2^{-2j/\infty}=1$.  Define
\[
(\dot\Delta_ju_0)^*(x)
=
\sup_y
\frac{|\dot\Delta_ju_0(x-y)|}{(1+2^j|y|)^M}.
\]
Since
\[
|\dot\Delta_ju_0(x-y)|
\le
(\dot\Delta_ju_0)^*(x)(1+2^j|y|)^M,
\]
our choice $N>M+2$ yields
\begin{align}
\label{eq:weighted-convolution-peetre}
L_j*|\dot\Delta_ju_0|(x)
&\le
(\dot\Delta_ju_0)^*(x)
\int_{\R^2}2^{2j}(1+2^j|y|)^{-N+M}\,dy\\
&\lesssim
(\dot\Delta_ju_0)^*(x).
\end{align}
Combining \eqref{eq:free-block-time-bound} and
\eqref{eq:weighted-convolution-peetre}, we obtain
\begin{equation}
\label{eq:free-dyadic-bound}
2^{j(s+\sigma)}
\|\dot\Delta_j e^{t\Delta}u_0(\cdot,x)\|_{L^r_T}
\lesssim
2^{j(s+\sigma-2/r)}(\dot\Delta_ju_0)^*(x).
\end{equation}
The identity
\[
s+\sigma-\frac2r=s+2-\frac2a
\]
then gives
\[
2^{j(s+\sigma)}
\|\dot\Delta_j e^{t\Delta}u_0(\cdot,x)\|_{L^r_T}
\lesssim
2^{j(s+2-2/a)}(\dot\Delta_ju_0)^*(x).
\]
Taking the $L^1_x\ell^q_j$ norm in
\eqref{eq:free-dyadic-bound} and applying
Corollary~\ref{cor:peetre-sequence}, with the harmless dyadic weights included
in the sequence, proves
\begin{equation}
\label{eq:free-heat-estimate}
\|e^{t\Delta}u_0\|_{\dot F^{s+\sigma}_{1,q}(L^r_T)}
\lesssim
\|u_0\|_{\dot F^{s+2-2/a}_{1,q}}.
\end{equation}

We next consider the Duhamel term.  Since $a\le r$, choose
$b\in[1,\infty]$ so that
\begin{equation}
\label{eq:time-young-exponents}
1+\frac1r=\frac1a+\frac1b,
\qquad
\frac1b=1+\frac1r-\frac1a=\frac\sigma2.
\end{equation}
Extend $F$ by zero outside $(0,T)$ and let
\[
\kappa_j(t,y)=\mathbf1_{(0,\infty)}(t)K_j(t,y).
\]
For every fixed $x$,
\begin{equation}
\label{eq:duhamel-time-convolution}
\dot\Delta_j u^{\mathrm{duh}}(t,x)
=
\int_{\R^2}
\bigl[\kappa_j(\cdot,y)*_t
\dot\Delta_jF(\cdot,x-y)\bigr](t)\,dy.
\end{equation}
Minkowski's inequality in $y$, Young's convolution inequality in time, and
\eqref{eq:time-young-exponents} give
\begin{align}
\label{eq:duhamel-block-time-bound}
\|\dot\Delta_j u^{\mathrm{duh}}(\cdot,x)\|_{L^r_T}
&\le
\int_{\R^2}
\|\kappa_j(\cdot,y)*_t
\dot\Delta_jF(\cdot,x-y)\|_{L^r(\R)}\,dy\\
&\le
\int_{\R^2}
\|\kappa_j(\cdot,y)\|_{L^b(\R)}
\|\dot\Delta_jF(\cdot,x-y)\|_{L^a_T}\,dy.
\end{align}
Lemma~\ref{lem:localized-heat-kernel} and
\eqref{eq:time-young-exponents} imply
\begin{equation}
\label{eq:duhamel-kernel-time-bound}
\|\kappa_j(\cdot,y)\|_{L^b(\R)}
\lesssim
2^{-j\sigma}L_j(y).
\end{equation}
Substituting \eqref{eq:duhamel-kernel-time-bound} into
\eqref{eq:duhamel-block-time-bound}, we obtain
\begin{equation}
\label{eq:duhamel-dyadic-bound}
2^{j(s+\sigma)}
\|\dot\Delta_j u^{\mathrm{duh}}(\cdot,x)\|_{L^r_T}
\lesssim
L_j*G_j(x),
\end{equation}
where
\[
G_j(x)=2^{js}\|\dot\Delta_jF(\cdot,x)\|_{L^a_T}.
\]
It is important here that the Banach-valued function
\[
\mathcal G_j(x)=2^{js}\dot\Delta_jF(\cdot,x)
\in E:=L^a(0,T)
\]
is still frequency localized at scale $2^j$ in the spatial variable.  Let
\[
\mathcal G_j^*(x)
=
\sup_y
\frac{\|\mathcal G_j(x-y)\|_E}{(1+2^j|y|)^M}.
\]
Exactly as in \eqref{eq:weighted-convolution-peetre}, $N>M+2$ gives
\begin{equation}
\label{eq:duhamel-peetre-convolution}
L_j*G_j(x)
\lesssim
\mathcal G_j^*(x).
\end{equation}
Taking the mixed norm in \eqref{eq:duhamel-dyadic-bound}, using
\eqref{eq:duhamel-peetre-convolution}, and then applying
Corollary~\ref{cor:peetre-sequence}, we get
\begin{equation}
\label{eq:duhamel-heat-estimate}
\|u^{\mathrm{duh}}\|_{\dot F^{s+\sigma}_{1,q}(L^r_T)}
\lesssim
\|(\mathcal G_j^*)_j\|_{L^1_x\ell^q_j}
\lesssim
\|(\|\mathcal G_j(x)\|_E)_j\|_{L^1_x\ell^q_j}
=
\|F\|_{\dot F^s_{1,q}(L^a_T)}.
\end{equation}
Combining \eqref{eq:heat-solution-decomposition},
\eqref{eq:free-heat-estimate}, and \eqref{eq:duhamel-heat-estimate} proves
the first assertion.

For the Laplacian, frequency localization gives
\begin{equation}
\label{eq:laplacian-block-bound}
\|\dot\Delta_j\Delta u(\cdot,x)\|_{L^r_T}
\lesssim
2^{2j}\|\dot\Delta_j u(\cdot,x)\|_{L^r_T}.
\end{equation}
Multiplication of \eqref{eq:laplacian-block-bound} by
$2^{j(s+\sigma-2)}$ and summation in $L^1_x\ell^q_j$ yield
\begin{equation}
\label{eq:laplacian-full-bound}
\|\Delta u\|_{\dot F^{s+\sigma-2}_{1,q}(L^r_T)}
\lesssim
\|u\|_{\dot F^{s+\sigma}_{1,q}(L^r_T)}.
\end{equation}
Finally, the equation
\[
\partial_tu=\Delta u+F
\]
places $\partial_tu$ in the stated sum space and gives the corresponding norm
bound.  Every estimate above uses the time norm of the kernel on
$(0,\infty)$, so all constants are independent of $T$.
\end{proof}

The diagonal choice of the input and output time exponents places
$\Delta u$ and the forcing term in the same time-refined space. Take $a=r=\rho$ in Theorem~\ref{thm:general-heat}, we get
this direct maximal-regularity consequence as follows. 

\begin{corollary}[Diagonal maximal regularity]
\label{cor:max-reg}
Let $1\le\rho\le\infty$, $1\le q<\infty$, and $s\in\R$.  If
\[
u_0\in\dot F^{s+2-2/\rho}_{1,q},
\qquad
F\in\dot F^s_{1,q}(L^\rho_T),
\]
then
\[
\|u\|_{\dot F^{s+2}_{1,q}(L^\rho_T)}
+
\|\Delta u\|_{\dot F^s_{1,q}(L^\rho_T)}
+
\|\partial_tu\|_{\dot F^s_{1,q}(L^\rho_T)}
\lesssim
\|u_0\|_{\dot F^{s+2-2/\rho}_{1,q}}
+
\|F\|_{\dot F^s_{1,q}(L^\rho_T)}.
\]
\end{corollary}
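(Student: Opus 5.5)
The plan is to specialize Theorem~\ref{thm:general-heat} to the diagonal case $a=r=\rho$. With this choice,
\[
\sigma=2\Bigl(1+\frac1\rho-\frac1\rho\Bigr)=2,
\qquad
s+2-\frac2a=s+2-\frac2\rho,
\]
so the hypotheses of the corollary are exactly those of the theorem. The first conclusion of the theorem then gives
\[
\|u\|_{\dot F^{s+2}_{1,q}(L^\rho_T)}\lesssim \|u_0\|_{\dot F^{s+2-2/\rho}_{1,q}}+\|F\|_{\dot F^s_{1,q}(L^\rho_T)}.
\]
The second conclusion gives the same bound for $\|\Delta u\|_{\dot F^{s}_{1,q}(L^\rho_T)}$. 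Alternatively, this follows directly from \eqref{eq:laplacian-full-bound}, since $\sigma-2=0$. In the diagonal case the Young exponent in \eqref{eq:time-young-exponents} is $b=1$, so the kernel bound used is the $L^1_t$ one from Lemma~\ref{lem:localized-heat-kernel}, $\|K_j(\cdot,x)\|_{L^1}\lesssim 2^{-2j}L_j(x)$. This is exactly what produces the full gain of two derivatives.

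It remains to bound $\partial_t u$. Theorem~\ref{thm:general-heat} only places it in a sum space. Here, however, both summands lie in the same space $\dot F^s_{1,q}(L^\rho_T)$, so we can estimate it directly. Using the equation blockwise,
\[
\dot\Delta_j\partial_t u(t,x)=\dot\Delta_j\Delta u(t,x)+\dot\Delta_j F(t,x).
\]
Taking the $L^\rho_T$ norm in time for each fixed $x$ and $j$ and applying the triangle inequality in $L^\rho_T$ gives a pointwise bound by $\|\dot\Delta_j\Delta u(\cdot,x)\|_{L^\rho_T}+\|\dot\Delta_jF(\cdot,x)\|_{L^\rho_T}$. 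We then multiply by $2^{js}$ and take the $L^1_x\ell^q_j$ norm. The mixed norm is a genuine norm for $q\ge1$, so the triangle inequality yields
\[
\|\partial_tu\|_{\dot F^s_{1,q}(L^\rho_T)}\le\|\Delta u\|_{\dot F^s_{1,q}(L^\rho_T)}+\|F\|_{\dot F^s_{1,q}(L^\rho_T)}.
\]
Combined with the bound on $\Delta u$, this gives the stated estimate. All constants are independent of $T$, as in the theorem.

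The only point needing care is justifying the identity $\partial_t u=\Delta u+F$ at the level of each dyadic block for the mild solution. This step is the main obstacle, though a mild one. I would argue it distributionally in $t$ for fixed $x$, using the explicit formula
\[
\dot\Delta_j u(t)=e^{t\Delta}\dot\Delta_ju_0+\int_0^t e^{(t-\tau)\Delta}\dot\Delta_jF(\tau)\,d\tau.
\]
Since each block is frequency localized in an annulus, $e^{t\Delta}\dot\Delta_j$ is a smooth family of convolution operators with kernels $K_j$ controlled by Lemma~\ref{lem:localized-heat-kernel}. Hence differentiation under the integral is legitimate for $F$ with $\dot\Delta_jF(\cdot,x)\in L^\rho_T$. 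This is the same reasoning the theorem already invokes for its last assertion, so the corollary follows.
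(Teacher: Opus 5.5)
Your proposal is correct and follows the paper's route: the corollary is obtained by setting $a=r=\rho$ in Theorem~\ref{thm:general-heat}, so that $\sigma=2$ and the sum space for $\partial_t u$ collapses to $\dot F^s_{1,q}(L^\rho_T)$ by the triangle inequality. Your blockwise justification of $\partial_t u=\Delta u+F$ adds a little detail that the paper leaves implicit, but the argument is the same.
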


\section{Auxiliary vector-valued estimates}
\label{sec:auxiliary}

We now develop the auxiliary estimates needed to control the Keller--Segel
drift.  The Bernstein and localized multiplier bounds provide dyadic
derivative estimates in Banach-valued spaces.  A geometric packing argument,
combined with the sequence-valued Peetre estimate, then yields the endpoint
Jawerth-type summability.  The section concludes by applying these tools to
the homogeneous Poisson field.

We start with the Banach-valued form of the Bernstein inequality.

\begin{lemma}[Vector-valued Bernstein inequality]
\label{lem:bernstein}
Let $E$ be a Banach space, let $\alpha$ be a multi-index, and suppose
$\supp\widehat U\subset\{|\xi|\le C2^j\}$.  For
$1\le p\le r\le\infty$,
\[
\|\nabla^\alpha U\|_{L^r_x(E)}
\lesssim
2^{j|\alpha|}2^{2j(1/p-1/r)}\|U\|_{L^p_x(E)}.
\]
If $\widehat U$ is supported in an annulus
$\{c2^j\le|\xi|\le C2^j\}$, then
\[
\|\nabla U\|_{L^p_x(E)}\sim2^j\|U\|_{L^p_x(E)}.
\]
\end{lemma}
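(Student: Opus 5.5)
The plan is to reduce both assertions to convolution with a rescaled Schwartz kernel and then apply Young's inequality in its Bochner form. Scalar kernels act on $E$-valued functions with the same $L^1$ bounds, so nothing beyond Minkowski's inequality for Bochner integrals is needed.

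\textbf{Upper bound.} Fix $\chi\in\cS(\R^2)$ with $\widehat\chi\equiv1$ on $\{|\xi|\le C\}$. Set $\chi_j(x)=2^{2j}\chi(2^jx)$. Because $\widehat U$ is supported in $\{|\xi|\le C2^j\}$, we have $U=\chi_j*U$, and therefore
\[
\nabla^\alpha U=(\nabla^\alpha\chi_j)*U,
\qquad
\nabla^\alpha\chi_j(x)=2^{j|\alpha|}2^{2j}(\nabla^\alpha\chi)(2^jx).
\]
The convolution is a Bochner integral $\int\nabla^\alpha\chi_j(y)\,U(x-y)\,dy$. Taking norms inside gives the pointwise bound
\[
\|\nabla^\alpha U(x)\|_E\le |\nabla^\alpha\chi_j|*\|U\|_E(x).
\]
Scalar Young's inequality with $1+\frac1r=\frac1p+\frac1m$ then gives
\[
\|\nabla^\alpha U\|_{L^r_x(E)}\le\|\nabla^\alpha\chi_j\|_{L^m}\|U\|_{L^p_x(E)}.
\]
A change of variables yields $\|\nabla^\alpha\chi_j\|_{L^m}=2^{j|\alpha|}2^{2j(1-1/m)}\|\nabla^\alpha\chi\|_{L^m}$. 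Since $1-\frac1m=\frac1p-\frac1r$, this is exactly the claimed factor.

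\textbf{Annulus case.} The bound $\|\nabla U\|_{L^p_x(E)}\lesssim2^j\|U\|_{L^p_x(E)}$ is the case $|\alpha|=1$, $r=p$ of the first part. For the reverse inequality, pick $\widetilde\psi\in C_c^\infty(\R^2\setminus\{0\})$ equal to one on $\{c\le|\xi|\le C\}$. Write
\[
\widehat U(\xi)=\sum_{k=1}^2\frac{-i\xi_k\widetilde\psi(2^{-j}\xi)}{|\xi|^2}\,i\xi_k\widehat U(\xi),
\]
so that
\[
U=\sum_k 2^{-j}h_{k,j}*\partial_kU,
\qquad
h_{k,j}(x)=2^{2j}h_k(2^jx),
\qquad
\widehat h_k(\eta)=\frac{-i\eta_k\widetilde\psi(\eta)}{|\eta|^2}\in C_c^\infty.
\]
Hence $h_k\in\cS$ and $\|h_{k,j}\|_{L^1}=\|h_k\|_{L^1}$ uniformly in $j$. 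The same pointwise Bochner estimate followed by Young's inequality with an $L^1$ kernel gives $\|U\|_{L^p_x(E)}\lesssim2^{-j}\|\nabla U\|_{L^p_x(E)}$.

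\textbf{Main subtlety.} The delicate point is justifying $U=\chi_j*U$ for $E$-valued $U$ when $E$ is a general Banach space, where the Fourier transform of $E$-valued distributions needs care. I would handle this by scalarization, as in Lemma~\ref{lem:peetre-pointwise}. For each $\ell\in E^*$ the scalar function $\ell\circ U$ has Fourier support in the same set, so $\ell(U-\chi_j*U)=0$ for all $\ell$, and Hahn--Banach gives $U=\chi_j*U$. The identical argument applies to the reproducing formula in the annulus case. All constants depend only on $\chi$, $\widetilde\psi$, $\alpha$, $c$ and $C$, never on $j$ or $E$.
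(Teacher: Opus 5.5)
Your proposal is correct and follows the paper's proof essentially step for step: a rescaled Schwartz kernel reproduces $\nabla^\alpha U$, the Bochner–Minkowski bound gives $\|\nabla^\alpha U(x)\|_E\le 2^{j|\alpha|}\,|K_j|*\|U\|_E(x)$, scalar Young with $1+\frac1r=\frac1p+\frac1m$ and scaling of $\|K_j\|_{L^m}$ finish the first part, and for the annulus you use the same reconstruction $U=2^{-j}\sum_\ell T_{\ell,j}\partial_\ell U$ with symbols $-i\eta_\ell\chi(\eta)/|\eta|^2$ and uniformly bounded $L^1$ kernels. Your scalarization argument for $U=\chi_j*U$ justifies a point the paper leaves implicit. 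To make it airtight, use strong measurability to pass to a separable subspace of $E$. Then countably many functionals norm that subspace, and the exceptional null sets no longer depend on $\ell$.
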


\begin{proof}
Choose $\chi\in C_c^\infty(\R^2)$ equal to one on the normalized support of
$\widehat U$ and define $K=\mathcal F^{-1}\bigl[(i\xi)^\alpha\chi(\xi)\bigr]
\in\mathcal S(\mathbb R^2)$.  Then
\begin{equation}
\label{eq:bernstein-kernel-representation}
\nabla^\alpha U
=
2^{j|\alpha|}K_j*U,
\qquad
K_j(x)=2^{2j}K(2^jx).
\end{equation}
Taking the $E$-norm and using
Minkowski's inequality gives
\[
\|\nabla^\alpha U(x)\|_E
\le
2^{j|\alpha|}
(|K_j|*\|U\|_E)(x).
\]
Let $m$ be determined by
\[
1+\frac1r=\frac1m+\frac1p.
\]
Young's inequality applied to
\eqref{eq:bernstein-kernel-representation}, together with the scaling of
$K_j$, implies
\begin{align}
\label{eq:bernstein-direct-proof}
\|\nabla^\alpha U\|_{L^r_x(E)}
&\lesssim
2^{j|\alpha|}\|K_j\|_{L^m}
\|U\|_{L^p_x(E)}\\
&\lesssim
2^{j|\alpha|}2^{2j(1-1/m)}
\|U\|_{L^p_x(E)}\\
&=
2^{j|\alpha|}2^{2j(1/p-1/r)}
\|U\|_{L^p_x(E)}.
\end{align}
This proves the first estimate.

Assume now that $\widehat U$ is supported in an annulus.  Choose
$\chi\in C_c^\infty(\R^2\setminus\{0\})$ equal to one on the normalized
annulus and define
\[
m_\ell(\eta)
=-i\frac{\eta_\ell}{|\eta|^2}\chi(\eta),
\qquad
T_{\ell,j}=m_\ell(2^{-j}D),
\qquad \ell=1,2.
\]
The inverse Fourier transforms of $m_\ell(2^{-j}\cdot)$ are rescaled
Schwartz kernels with uniformly bounded $L^1$ norms.  On the support of $\widehat U$,
\begin{equation}
\label{eq:bernstein-inverse-symbol}
2^{-j}\sum_{\ell=1}^2
m_\ell(2^{-j}\xi)i\xi_\ell
=
\chi(2^{-j}\xi)=1.
\end{equation}
Consequently,
\begin{equation}
\label{eq:bernstein-reconstruction}
U=2^{-j}\sum_{\ell=1}^2T_{\ell,j}\partial_\ell U,
\end{equation}
and the uniform $L^1$ bounds of the kernels in
\eqref{eq:bernstein-reconstruction} imply
\begin{equation}
\label{eq:bernstein-reverse-proof}
\|U\|_{L^p_x(E)}
\lesssim
2^{-j}\|\nabla U\|_{L^p_x(E)}.
\end{equation}
The opposite inequality is the direct Bernstein estimate with
$\alpha$ of order one and $p=r$.  Combining the two inequalities proves the
annular equivalence.
\end{proof}

For the low--high paraproduct, a norm-level Bernstein inequality is not
sufficient: the derivative must be controlled inside the Peetre maximal
function.  The next lemma supplies this localized pointwise version.

\begin{lemma}[Peetre control for localized multipliers]
\label{lem:peetre-multiplier}
Let $E$ be a Banach space, and let $U_j$ be supported in an annulus of size
$2^j$.  If $T_j=m(2^{-j}D)$ with $m\in C_c^\infty$, then
\[
(T_jU_j)^*(x)\lesssim U_j^*(x).
\]
In particular,
\[
(\nabla U_j)^*(x)\lesssim2^jU_j^*(x).
\]
\end{lemma}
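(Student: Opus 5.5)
The proof is a direct kernel argument, the same as in the proof of Theorem~\ref{thm:general-heat}. It writes $T_jU_j$ as a convolution with a rescaled Schwartz kernel and then absorbs the polynomial weight of the Peetre maximal function into the kernel's decay. Let $K=\mathcal F^{-1}m\in\cS(\R^2)$ and $K_j(x)=2^{2j}K(2^jx)$, so that
\[
T_jU_j(z)=\int_{\R^2}K_j(w)\,U_j(z-w)\,dw .
\]
Minkowski's inequality for the Bochner integral then gives $\|T_jU_j(z)\|_E\le\int|K_j(w)|\,\|U_j(z-w)\|_E\,dw$. First, note that $T_jU_j$ still has Fourier support in the same annulus (indeed in $\supp\widehat U_j$), so $(T_jU_j)^*$ is defined with the same scale $2^j$ and the same $M$.

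Fix $x$ and $y$, and put $z=x-y$. By the definition of $U_j^*$,
\[
\|U_j(x-y-w)\|_E\le U_j^*(x)\,(1+2^j|y+w|)^M\le U_j^*(x)\,(1+2^j|y|)^M(1+2^j|w|)^M ,
\]
where the last step uses the submultiplicativity $(1+|a+b|)\le(1+|a|)(1+|b|)$. Substituting this gives
\[
\frac{\|T_jU_j(x-y)\|_E}{(1+2^j|y|)^M}\le U_j^*(x)\int_{\R^2}2^{2j}|K(2^jw)|(1+2^j|w|)^M\,dw = U_j^*(x)\int|K(v)|(1+|v|)^M\,dv .
\]
The last integral is finite because $K\in\cS$, and it does not depend on $j$. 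Taking the supremum over $y$ gives $(T_jU_j)^*(x)\lesssim U_j^*(x)$ uniformly in $j$.

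For the gradient statement, choose $\chi\in C_c^\infty$ equal to one on the normalized annulus and set $m_\ell(\eta)=i\eta_\ell\chi(\eta)\in C_c^\infty$. On $\supp\widehat U_j$ we have $\partial_\ell U_j=2^j m_\ell(2^{-j}D)U_j$, so the first part applied to each component gives $(\nabla U_j)^*\lesssim2^jU_j^*$.

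The only delicate point is the Peetre weight. It must be shifted from $y$ to $y+w$ and absorbed into the kernel, which is exactly what the submultiplicativity of $1+|\cdot|$ combined with the Schwartz decay of $K$ achieves. Since $K$ decays faster than any polynomial, no restriction on $M$ is needed beyond what is already fixed. No Fefferman--Stein or maximal-function argument is required at this stage.
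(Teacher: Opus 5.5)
Your proposal is correct and follows essentially the same route as the paper. Both write $T_jU_j$ as convolution with the rescaled Schwartz kernel $K_j$, transfer the Peetre weight onto the kernel via $1+2^j|y+w|\le(1+2^j|y|)(1+2^j|w|)$, bound the $j$-independent integral $\int|K(v)|(1+|v|)^M\,dv$, and handle the gradient by writing $\partial_\ell U_j=2^j m_\ell(2^{-j}D)U_j$ with a cutoff equal to one on the normalized annulus.
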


\begin{proof}
Let $K=\cF^{-1}m$.  Then
\[
T_jU_j(x)=\int_{\R^2}K_j(y)U_j(x-y)\,dy,
\qquad
K_j(y)=2^{2j}K(2^jy).
\]
For arbitrary $z\in\R^2$, the definition of $U_j^*(x)$ gives
\[
\|U_j(x-z-y)\|_E
\le
U_j^*(x)(1+2^j|z+y|)^M.
\]
The elementary Peetre inequality
\begin{equation}
\label{eq:elementary-peetre-inequality}
1+2^j|z+y|
\le
(1+2^j|z|)(1+2^j|y|)
\end{equation}
therefore implies
\begin{align}
\label{eq:localized-multiplier-peetre}
\frac{\|T_jU_j(x-z)\|_E}{(1+2^j|z|)^M}
&\le
\int_{\R^2}|K_j(y)|
\frac{\|U_j(x-z-y)\|_E}{(1+2^j|z|)^M}\,dy\\
&\le
U_j^*(x)
\int_{\R^2}|K_j(y)|(1+2^j|y|)^M\,dy.
\end{align}
After the change of variables $Y=2^jy$, the last integral becomes
\[
\int_{\R^2}|K(Y)|(1+|Y|)^M\,dY,
\]
which is finite and independent of $j$.  Taking the supremum over $z$ in
\eqref{eq:localized-multiplier-peetre} proves
\begin{equation}
\label{eq:localized-multiplier-final}
(T_jU_j)^*(x)\lesssim U_j^*(x).
\end{equation}
For the derivative assertion, choose a cutoff equal to one on the annular
support of $U_j$ and write
\[
\nabla U_j=2^j\widetilde T_jU_j,
\]
where $\widetilde T_j$ is of the preceding form with a fixed smooth compactly
supported multiplier.  The first estimate then yields
\[
(\nabla U_j)^*(x)\lesssim2^jU_j^*(x).
\]
\end{proof}

The scalar counterpart of the next estimate is a special case of the
classical Jawerth embedding
\[
\dot F^1_{1,q}(\mathbb R^2)
\hookrightarrow
\dot B^0_{2,1}(\mathbb R^2);
\]
see \cite{Jawerth1977,FrazierJawerth1990}.  Since the time-refined
application below requires the temporal norm to remain inside the spatial
Triebel--Lizorkin norm, we give a direct Banach-valued proof.
\begin{lemma}[Banach-valued endpoint Jawerth estimate]
\label{lem:jawerth}Let $1\le q<\infty$, let $E$ be a Banach space, and let
$(U_j)_{j\in\mathbb Z}$ be a family of strongly measurable
$E$-valued functions on $\mathbb R^2$.  Suppose that
\[
\operatorname{supp}\widehat U_j
\subset\{\xi\in\mathbb R^2:|\xi|\le C_0 2^j\},
\qquad j\in\mathbb Z.
\]
Then
\[
\sum_{j\in\Z}2^{-j}\|U_j\|_{L^2_x(E)}
\lesssim
\left\|
\left(\|U_j(x)\|_E\right)_j
\right\|_{L^1_x\ell^q_j}.
\]
In particular,
\[
\sum_{j\in\Z}\|\dot\Delta_jf\|_{L^2_TL^2_x}
\lesssim
\|f\|_{\dot F^1_{1,q}(L^2_T)}.
\]
\end{lemma}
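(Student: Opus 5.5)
Since $\ell^q\hookrightarrow\ell^\infty$, it suffices to prove the first inequality with the right-hand side replaced by $\|\sup_j\|U_j(\cdot)\|_E\|_{L^1_x}$. I would in fact work with the larger majorant
\[
g(x):=\Bigl(\sum_j (U_j^*(x))^q\Bigr)^{1/q}\ \ge\ \sup_j U_j^*(x),
\qquad A:=\|g\|_{L^1_x},
\]
where $U_j^*$ is the Peetre maximal function at scale $2^j$ of Section~\ref{sec:prelim}. By Corollary~\ref{cor:peetre-sequence}, $A\lesssim\|(\|U_j(x)\|_E)_j\|_{L^1_x\ell^q_j}$. Hence everything reduces to showing $\sum_j2^{-j}\|U_j\|_{L^2_x(E)}\lesssim A$.

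The "in particular" statement is immediate once this is done. Take $E=L^2(0,T)$ and $U_j=2^j\dot\Delta_jf$, whose spatial Fourier support lies in $\{|\xi|\le C2^j\}$. By Fubini, $L^2_x(L^2_T)=L^2_TL^2_x$, so the left-hand side becomes $\sum_j\|\dot\Delta_jf\|_{L^2_TL^2_x}$ and the right-hand side becomes $\|f\|_{\dot F^1_{1,q}(L^2_T)}$.

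For the main inequality, set $a_j(x)=\|U_j(x)\|_E$. I would use two pieces of information about $a_j$.
\begin{itemize}
\item[(i)] The pointwise bound $a_j\le U_j^*\le g$.
\item[(ii)] A uniform bound at the natural height. The vector-valued Bernstein inequality (Lemma~\ref{lem:bernstein}, with $p=1$, $r=\infty$) gives $\|a_j\|_{L^\infty}\lesssim 2^{2j}\|a_j\|_{L^1}$. Better, the Peetre function gives $U_j^*(x)\gtrsim (1+2^j|x-y|)^{-M}a_j(y)$; integrating in $x$ over the ball $B(y,2^{-j})$ yields $a_j(y)\lesssim 2^{2j}\int_{B(y,2^{-j})}g$.
\end{itemize}
Thus $a_j\le\min(g,\,C2^{2j}A)$, and more precisely $a_j\le C2^{2j}\int_{B(\cdot,2^{-j})}g$, which is a localized form of the same bound.

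Next I would apply the layer-cake formula $\|a_j\|_2^2=2\int_0^\infty t\,|\{a_j>t\}|\,dt$ and decompose the heights dyadically: $t\sim 2^{2k}A$ with $k\le j$. I would set $\mu_k:=\int_{2^{2k-2}A}^{2^{2k}A}|\{g>t\}|\,dt$, so that $\sum_k\mu_k\le A$. The crude bound then gives
\[
2^{-2j}\|a_j\|_2^2\lesssim\sum_{k\le j}2^{2(k-j)}A\,\mu_k .
\]
Summed over $j$ in $\ell^1$ after squaring, this is fine. The difficulty is that the statement asks for $\sum_j2^{-j}\|a_j\|_2$, an $\ell^1$-sum of square roots, and the naive bound only produces $\sum_k(A\mu_k)^{1/2}$ instead of $\sum_k\mu_k$.

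This summation is the step I expect to be the main obstacle. It is exactly where the strength of the Jawerth embedding ($F^{0}_{1,\infty}\hookrightarrow B^{-1}_{2,1}$, with $\ell^1$ rather than $\ell^2$ in the Besov index) enters. My first attempt would be to replace the global threshold $2^{2j}A$ by the localized one in (ii). Let $Q$ range over dyadic cubes of side $2^{-j}$ and set $m_Q:=2^{2j}\int_{3Q}g$. Then $\|a_j\|_{L^2(Q)}^2\lesssim m_Q\int_Q\min(g,m_Q)$, and I would organize the sum over $j$ and $Q$ as a stopping-time / packing argument over the cubes where $m_Q\sim 2^{2k}$. The aim is to show that each level set $\{g>2^{2k}\}$ is charged by only boundedly many scales with geometric decay, so that
\[
\sum_j2^{-j}\Bigl(\sum_Q m_Q\int_Q\min(g,m_Q)\Bigr)^{1/2}\lesssim\|g\|_{L^1}
\]
via Cauchy--Schwarz with geometric weights $2^{-\varepsilon|j-k|}$. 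This is the "geometric packing argument" announced at the start of this section. If this closes, the lemma follows with constants depending only on $q$, $C_0$, and the Littlewood--Paley data. Nothing depends on $E$ beyond the norm, because every step after the Peetre reduction is scalar in $a_j$ and $g$.
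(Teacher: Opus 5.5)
Your preliminary reductions are sound. Passing to $g=(\sum_j(U_j^*)^q)^{1/q}$ through Corollary~\ref{cor:peetre-sequence}, deducing the ``in particular'' assertion by Fubini, and the pointwise bounds (i)--(ii) are all correct.

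\textbf{The gap.} The proposal stops at the only nontrivial step: the estimate $\sum_j2^{-j}\|a_j\|_{L^2}\lesssim\|g\|_{L^1}$ is never proved, and your last paragraph states an aim that ends with ``if this closes''. You correctly diagnose why the global layer-cake fails: expanding $\|a_j\|_{L^2}^2$ in height and taking square roots only afterwards gives $\sum_k(A\mu_k)^{1/2}$, which $A=\sum_k\mu_k$ does not control. Your proposed repair also rests on too weak a localized bound. You keep only the average $a_j(y)\lesssim2^{2j}\int_{B(y,2^{-j})}g$, so a cube with $m_Q>\lambda$ is known only to lie in $\{\mathcal Mg\gtrsim\lambda\}$. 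Since $\int_0^\infty|\{\mathcal Mg\gtrsim\lambda\}|\,d\lambda=\infty$ for every nonzero $g\in L^1$, a level-set packing built on averages has no evident reason to close, and you do not show that it does.

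\textbf{How the paper closes it.} Two changes are needed. First, your inequality $a_j(y)\le(1+2^j|x-y|)^MU_j^*(x)$ holds for \emph{every} $x$ near $y$, not just on average. So for $Q\in\mathcal D_j$ you get an infimum bound $A_Q:=\esssup_Q a_j\lesssim\essinf_Q U_j^*$. Hence $A_Q>\lambda$ forces $Q\subset\Omega_\lambda:=\{g>\lambda/C\}$.

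Second, take the square root \emph{before} integrating in height. Since $|Q|=2^{-2j}$, Minkowski's integral inequality in $\ell^2(\mathcal D_j)$ gives
\[
2^{-j}\|a_j\|_{L^2}
\le\Bigl(\sum_{Q\in\mathcal D_j}(|Q|A_Q)^2\Bigr)^{1/2}
\le\int_0^\infty\Bigl(\sum_{Q\in\mathcal D_j,\ A_Q>\lambda}|Q|^2\Bigr)^{1/2}d\lambda .
\]
At each fixed height the remaining bound is purely geometric. Decompose the dyadic cubes contained in a finite-measure set $\Omega$ into maximal cubes $P_\alpha$ of side $2^{-m_\alpha}$. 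Then
\[
\Bigl(\sum_{Q\in\mathcal D_j,\ Q\subset P_\alpha}|Q|^2\Bigr)^{1/2}=2^{-(j-m_\alpha)}|P_\alpha|
\qquad (j\ge m_\alpha),
\]
so $\sum_j\bigl(\sum_{Q\in\mathcal D_j,\ Q\subset\Omega}|Q|^2\bigr)^{1/2}\lesssim\sum_\alpha|P_\alpha|\le|\Omega|$. Summing in $j$ inside the $\lambda$-integral yields $\int_0^\infty|\Omega_\lambda|\,d\lambda=C\|g\|_{L^1}$. No stopping time across scales and no Cauchy--Schwarz with geometric weights is needed.
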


\begin{proof}
We begin with
the dyadic packing estimate used below.  For $j\in\mathbb Z$, let
\[
\mathcal D_j
=
\left\{
2^{-j}\bigl(k+[0,1)^2\bigr):k\in\mathbb Z^2
\right\}
\]
	be the collection of half-open dyadic cubes of side length $2^{-j}$. Let $a_j$ be constant on every dyadic cube
$Q\in\mathcal D_j$, with value $a_Q\ge0$.  Then we claim
\begin{equation}
\label{eq:jawerth-packing-claim}
\sum_j
\left(
\sum_{Q\in\mathcal D_j}(|Q|a_Q)^2
\right)^{1/2}
\lesssim
\left\|
\left(a_j(x)\right)_j
\right\|_{L^1_x\ell^q_j}.
\end{equation}

To prove the claim, set
\[
S(x)=\left(\sum_j a_j(x)^q\right)^{1/q}.
\]
For each fixed $j$, the layer-cake representation and Minkowski's integral
inequality in $\ell^2(\mathcal D_j)$ give
\begin{align}
\label{eq:jawerth-packing-layer-cake}
\left(
\sum_{Q\in\mathcal D_j}(|Q|a_Q)^2
\right)^{1/2}
&\le
\int_0^\infty
\left(
\sum_{\substack{Q\in\mathcal D_j\\a_Q>\lambda}}
|Q|^2
\right)^{1/2}d\lambda.
\end{align}
If $a_Q>\lambda$, then $S(x)>\lambda$ throughout $Q$, and hence
\[
Q\subset\Omega_\lambda:=\{x:S(x)>\lambda\}.
\]
We therefore need the geometric bound
\begin{equation}
\label{eq:jawerth-packing-geometric}
\sum_j
\left(
\sum_{\substack{Q\in\mathcal D_j\\Q\subset\Omega}}|Q|^2
\right)^{1/2}
\lesssim|\Omega|.
\end{equation}
The assertion is trivial when $|\Omega|=\infty$, so assume
$|\Omega|<\infty$.  Decompose all dyadic cubes contained in $\Omega$ into
maximal dyadic cubes $(P_\alpha)_\alpha$.  They are pairwise disjoint, and
every dyadic cube contained in $\Omega$ lies in one of them.  If
$P_\alpha$ has side length $2^{-m_\alpha}$, then for $j\ge m_\alpha$,
\[
\left(
\sum_{\substack{Q\in\mathcal D_j\\Q\subset P_\alpha}}|Q|^2
\right)^{1/2}
=
2^{-(j-m_\alpha)}|P_\alpha|.
\]
Consequently, by Minkowski's inequality in $\ell^2_Q$,
\begin{align}
\sum_j
\left(
\sum_{\substack{Q\in\mathcal D_j\\Q\subset\Omega}}|Q|^2
\right)^{1/2}
&\le
\sum_\alpha\sum_{j\ge m_\alpha}
2^{-(j-m_\alpha)}|P_\alpha|\\
&\lesssim
\sum_\alpha|P_\alpha|
\le|\Omega|,
\end{align}
which proves \eqref{eq:jawerth-packing-geometric}.  Applying this estimate
to $\Omega_\lambda$, summing
\eqref{eq:jawerth-packing-layer-cake} over $j$,
we obtain
\[
\sum_j
\left(
\sum_{Q\in\mathcal D_j}(|Q|a_Q)^2
\right)^{1/2}
\lesssim
\int_0^\infty|\Omega_\lambda|\,d\lambda
=
\int_{\R^2}S(x)\,dx.
\]
This proves the claim.

\medskip
For each $j$ and $Q\in\mathcal D_j$ define
\[
A_Q=\esssup_{z\in Q}\|U_j(z)\|_E.
\]
Let
\[
U_j^*(x)=
\sup_y\frac{\|U_j(x-y)\|_E}{(1+2^j|y|)^M}
\]
with the parameters fixed in Section~\ref{sec:prelim}.  If $x,z\in Q$, then
$2^j|x-z|\le C$, and hence
\[
\|U_j(z)\|_E
\le
(1+2^j|x-z|)^MU_j^*(x)
\lesssim U_j^*(x).
\]
Taking the essential supremum in $z$ gives
\begin{equation}
\label{eq:jawerth-cube-peetre}
A_Q\lesssim\essinf_{x\in Q}U_j^*(x).
\end{equation}
Define the step function
\[
A_j(x)=A_Q
\qquad(x\in Q\in\mathcal D_j).
\]
Then
\[
A_j(x)\lesssim U_j^*(x)
\qquad\text{for almost every }x.
\]

Since $\|U_j(x)\|_E\le A_Q$ on $Q$,
\[
\|U_j\|_{L^2_x(E)}^2
\le
\sum_{Q\in\mathcal D_j}|Q|A_Q^2.
\]
In dimension two, $|Q|=2^{-2j}$, and therefore
\begin{align}
\label{eq:jawerth-block-L2}
2^{-j}\|U_j\|_{L^2_x(E)}
&\le
\left(
\sum_{Q\in\mathcal D_j}(|Q|A_Q)^2
\right)^{1/2}.
\end{align}
Summing \eqref{eq:jawerth-block-L2} over $j$ and applying the claim
\eqref{eq:jawerth-packing-claim}, we get
\begin{equation}
\label{eq:jawerth-packing-step}
\sum_j2^{-j}\|U_j\|_{L^2_x(E)}
\lesssim
\|(A_j)_j\|_{L^1_x\ell^q_j}.
\end{equation}
By \eqref{eq:jawerth-cube-peetre}, $A_j\lesssim U_j^*$ pointwise.
Corollary~\ref{cor:peetre-sequence} therefore implies
\begin{align}
\label{eq:jawerth-peetre-step}
\|(A_j)_j\|_{L^1_x\ell^q_j}
\lesssim
\|(U_j^*)_j\|_{L^1_x\ell^q_j}\lesssim
\|\bigl(\|U_j(x)\|_E\bigr)_j\|_{L^1_x\ell^q_j}.
\end{align}
Combining \eqref{eq:jawerth-packing-step} and
\eqref{eq:jawerth-peetre-step} proves the estimate.

Finally, take $E=L^2(0,T)$ and
\[
U_j=2^j\dot\Delta_jf.
\]
The frequency-support hypothesis is satisfied, and
\[
\sum_j2^{-j}\|2^j\dot\Delta_jf\|_{L^2_x(L^2_T)}
=
\sum_j\|\dot\Delta_jf\|_{L^2_TL^2_x},
\]
whereas the right-hand side is exactly
$\|f\|_{\dot F^1_{1,q}(L^2_T)}$.  This proves the particular assertion.
\end{proof}
We next estimate the nonlocal velocity  $\nabla(-\Delta)^{-1}g$,
it is a Fourier multiplier of order $-1$ and its action on the $j$-th dyadic block produces a factor of order
$2^{-j}$.  Combining this dyadic multiplier estimate with Bernstein's
inequality and Lemma~\ref{lem:jawerth}, we show that the corresponding
dyadic series converges absolutely in the required norm.  

\begin{lemma}[Homogeneous Poisson reconstruction]
\label{lem:poisson}
Let $1\le q<\infty$ and
$g\in\dot F^1_{1,q}(L^2_T)$,
\[
b:=
\sum_{j\in\Z}\nabla(-\Delta)^{-1}\dot\Delta_jg
\]
then
\[
\|b\|_{L^\infty_xL^2_T}
\lesssim
\|g\|_{\dot F^1_{1,q}(L^2_T)}.
\]
\end{lemma}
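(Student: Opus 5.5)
The plan is to estimate each dyadic piece $b_j:=\nabla(-\Delta)^{-1}\dot\Delta_jg$ separately in $L^\infty_xL^2_T$ and then sum the resulting bounds using Lemma~\ref{lem:jawerth}. This proves absolute convergence of the series and the bound together.

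\emph{Step 1: the order $-1$ multiplier on a single block.} Choose $\widetilde\varphi\in C_c^\infty(\R^2\setminus\{0\})$ equal to one on the Littlewood--Paley annulus. Then we can write
\[
b_j=2^{-j}m(2^{-j}D)\dot\Delta_jg,
\qquad
m(\eta)=\frac{-i\eta}{|\eta|^2}\widetilde\varphi(\eta)\in C_c^\infty .
\]
Regard $\dot\Delta_jg$ as an $E$-valued function with $E=L^2(0,T)$. Minkowski's inequality with the $L^1$-normalized kernel $2^{2j}\check m(2^j\cdot)$ gives
\[
\|b_j\|_{L^2_x(E)}\lesssim2^{-j}\|\dot\Delta_jg\|_{L^2_x(E)}.
\]
Since $b_j$ has Fourier support in $\{|\xi|\le C2^j\}$, the vector-valued Bernstein inequality (Lemma~\ref{lem:bernstein}) with $p=2$, $r=\infty$, $\alpha=0$ in dimension two gives
\[
\|b_j\|_{L^\infty_x(E)}\lesssim2^{j}\|b_j\|_{L^2_x(E)}\lesssim\|\dot\Delta_jg\|_{L^2_x(E)}.
\]
By Fubini, $\|\dot\Delta_jg\|_{L^2_x(L^2_T)}=\|\dot\Delta_jg\|_{L^2_TL^2_x}$.

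\emph{Step 2: summation.} The particular case of Lemma~\ref{lem:jawerth} gives
\[
\sum_j\|b_j\|_{L^\infty_xL^2_T}\lesssim\sum_j\|\dot\Delta_jg\|_{L^2_TL^2_x}\lesssim\|g\|_{\dot F^1_{1,q}(L^2_T)}<\infty .
\]
The series defining $b$ therefore converges absolutely in the Banach space $L^\infty_x(L^2_T)$. The triangle inequality then yields the claimed bound. Absolute convergence also shows that $b$ is a well-defined function, not merely a distribution modulo polynomials.

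\emph{Main obstacle.} The delicate point is the sharp scaling balance: the factor $2^{-j}$ from the multiplier must exactly cancel the factor $2^{j}$ in the $L^2\to L^\infty$ Bernstein loss. After that cancellation, summability in $j$ can come only from the endpoint Jawerth estimate, which has already been established in Lemma~\ref{lem:jawerth}.

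A further technicality is strong measurability of $x\mapsto\dot\Delta_jg(\cdot,x)\in L^2(0,T)$, which is needed to apply the Banach-valued lemmas. I would handle it by first working with $g$ in a dense Schwartz-type class and then passing to the limit using the uniform bound.
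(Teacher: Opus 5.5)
Your proposal is correct and follows the paper's proof essentially step for step. Both obtain the order $-1$ multiplier gain $2^{-j}$ on each block, trade it against the $2^{j}$ loss from the vector-valued Bernstein inequality with $E=L^2(0,T)$, $p=2$, $r=\infty$, and then sum using the particular case of Lemma~\ref{lem:jawerth} to get absolute convergence in $L^\infty_xL^2_T$. The only cosmetic difference is that you prove $\|b_j\|_{L^2_x(E)}\lesssim 2^{-j}\|\dot\Delta_jg\|_{L^2_x(E)}$ with an $L^1$-normalized kernel and Minkowski's inequality, whereas the paper uses Plancherel in space; both arguments are equally valid.
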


\begin{proof}
For each $j$, set
\[
b_j=\nabla(-\Delta)^{-1}\dot\Delta_jg.
\]
The multiplier $i\xi/|\xi|^2$ is smooth on the annular support of
$\dot\Delta_jg$ and has order $-1$.  Plancherel's theorem in the spatial variable, followed by integration in
time, gives
\begin{equation}
\label{eq:poisson-block-L2}
\|b_j\|_{L^2_TL^2_x}
\lesssim
2^{-j}\|\dot\Delta_jg\|_{L^2_TL^2_x}.
\end{equation}
Applying the vector-valued Bernstein inequality with $E=L^2(0,T)$,
$p=2$, and $r=\infty$, and then using \eqref{eq:poisson-block-L2}, we obtain
\begin{align}
\label{eq:poisson-block-Linfty}
\|b_j\|_{L^\infty_xL^2_T}
\lesssim
2^j\|b_j\|_{L^2_x(L^2_T)}
\lesssim
\|\dot\Delta_jg\|_{L^2_TL^2_x}.
\end{align}
Summing \eqref{eq:poisson-block-Linfty} over $j$ gives
\begin{equation}
\label{eq:poisson-series-bound}
\sum_j\|b_j\|_{L^\infty_xL^2_T}
\lesssim
\sum_j\|\dot\Delta_jg\|_{L^2_TL^2_x}.
\end{equation}
Lemma~\ref{lem:jawerth} bounds the right-hand side of
\eqref{eq:poisson-series-bound} by
$\|g\|_{\dot F^1_{1,q}(L^2_T)}$.  Thus $\sum_jb_j$ converges absolutely in
the Banach space $L^\infty_xL^2_T$, and its sum $b$ satisfies
\begin{equation}
\label{eq:poisson-final-bound}
\|b\|_{L^\infty_xL^2_T}
\le
\sum_j\|b_j\|_{L^\infty_xL^2_T}
\lesssim
\|g\|_{\dot F^1_{1,q}(L^2_T)}.
\end{equation}
\end{proof}

\section{The Keller--Segel equation}
\label{sec:KS}

We now combine the linear theory of Section~\ref{sec:heat} with the
auxiliary estimates of Section~\ref{sec:auxiliary}.  The first step is the
endpoint bilinear estimate for the drift, obtained by separating the three
Bony interactions.  We then establish the continuity and small-time
properties needed to carry out the fixed-point argument and prove the main
well-posedness theorem.

For $T\in(0,\infty]$, recall
\begin{equation}
\label{eq:solution-space-X}
X_T^q
=
\dot F^0_{1,q}(L^\infty_T)
\cap
\dot F^1_{1,q}(L^2_T),
\end{equation}
with the sum norm.  Define
\begin{equation}
\label{eq:bilinear-duhamel-operator}
\cB(f,g)(t)
=
\int_0^t e^{(t-\tau)\Delta}
\nabla\cdot\left(f\nabla(-\Delta)^{-1}g\right)(\tau)\,d\tau.
\end{equation}

\subsection{Proof of the bilinear estimate}

The critical task is to place the nonlinear drift in
$\dot F^0_{1,q}(L^1_T)$.  After defining the homogeneous velocity through
Lemma~\ref{lem:poisson}, we use Bony's decomposition and treat the
low--high, high--low, and comparable-frequency interactions by different
endpoint mechanisms.

\begin{proof}[Proof of Proposition~\ref{prop:bilinear}]
 For a sufficiently large fixed integer $N$, Bony's decomposition gives
\begin{equation}
\label{eq:bony-decomposition}
fb=T_bf+T_fb+R(f,b),
\end{equation}
where
\begin{equation}
\label{eq:bony-paraproducts}
T_bf=\sum_k\dot S_{k-N}b\,\dot\Delta_kf,
\qquad
T_fb=\sum_k\dot S_{k-N}f\,\dot\Delta_kb,
\end{equation}
and
\begin{equation}
\label{eq:bony-remainder}
R(f,b)=\sum_k\dot\Delta_kf\,\widetilde{\dot\Delta}_kb,
\qquad
\widetilde{\dot\Delta}_kb
=
\sum_{|\ell-k|\le N}\dot\Delta_\ell b.
\end{equation}
We estimate the three interactions separately.

\medskip
\noindent\textbf{Step 1: the low--high interaction.}

Set
\[
u_k=\dot S_{k-N}b\,\dot\Delta_kf,
\qquad
h_k=\nabla\cdot u_k.
\]
For $N$ sufficiently large, the Fourier support of $u_k$, and hence that of
$h_k$, is contained in an annulus of size $2^k$.  Applying Lemma~\ref{lem:almost} with $E=L^1(0,T)$ gives
\begin{equation}
\label{eq:low-high-synthesis}
\|\nabla\cdot T_bf\|_{\dot F^0_{1,q}(L^1_T)}
\lesssim
\|(h_k^*)_k\|_{L^1_x\ell^q_k}.
\end{equation}
Lemma~\ref{lem:peetre-multiplier}, applied to the localized derivative, yields
\begin{equation}
\label{eq:low-high-derivative-peetre}
h_k^*(x)\lesssim2^ku_k^*(x).
\end{equation}
For each $z\in\R^2$, Hölder's inequality in time gives
\begin{align}
\label{eq:low-high-time-holder}
\|u_k(\cdot,z)\|_{L^1_T}
=
\|\dot S_{k-N}b(\cdot,z)\dot\Delta_kf(\cdot,z)\|_{L^1_T}
\lesssim
\|\dot S_{k-N}b(\cdot,z)\|_{L^2_T}
\|\dot\Delta_kf(\cdot,z)\|_{L^2_T}.
\end{align}
The low-frequency operator $\dot S_{k-N}$ has a convolution kernel with
uniformly bounded $L^1$ norm.  Hence, by Minkowski's inequality,
\begin{equation}
\label{eq:low-frequency-velocity-bound}
\|\dot S_{k-N}b(\cdot,z)\|_{L^2_T}
\lesssim
\|b\|_{L^\infty_xL^2_T}.
\end{equation}
Combining \eqref{eq:low-high-time-holder} and
\eqref{eq:low-frequency-velocity-bound}, we obtain
\begin{align}
\label{eq:low-high-ustar}
u_k^*(x)
=
\sup_y
\frac{\|u_k(\cdot,x-y)\|_{L^1_T}}
{(1+2^k|y|)^M}\lesssim
\|b\|_{L^\infty_xL^2_T}
\sup_y
\frac{\|\dot\Delta_kf(\cdot,x-y)\|_{L^2_T}}
{(1+2^k|y|)^M}.
\end{align}
Consequently,
\[
\begin{aligned}
\|\nabla\cdot T_bf\|_{\dot F^0_{1,q}(L^1_T)}
&\lesssim
\|b\|_{L^\infty_xL^2_T}\\
&\quad\times
\left\|
\left(
2^k
\sup_y
\frac{\|\dot\Delta_kf(\cdot,x-y)\|_{L^2_T}}
{(1+2^k|y|)^M}
\right)_k
\right\|_{L^1_x\ell^q_k}.
\end{aligned}
\]
The sequence-valued Peetre estimate, with $E=L^2(0,T)$, gives
\begin{equation}
\label{eq:low-high-sequence-peetre}
\left\|
\left(
2^k
\sup_y
\frac{\|\dot\Delta_kf(\cdot,x-y)\|_{L^2_T}}
{(1+2^k|y|)^M}
\right)_k
\right\|_{L^1_x\ell^q_k}
\lesssim
\|f\|_{\dot F^1_{1,q}(L^2_T)}.
\end{equation}
Combining \eqref{eq:low-high-synthesis}--\eqref{eq:low-high-sequence-peetre}
with the Poisson estimate \eqref{eq:poisson-final-bound}, we conclude that
\begin{equation}
\label{eq:low-high-final}
\|\nabla\cdot T_bf\|_{\dot F^0_{1,q}(L^1_T)}
\lesssim
\|f\|_{\dot F^1_{1,q}(L^2_T)}
\|g\|_{\dot F^1_{1,q}(L^2_T)}.
\end{equation}

\medskip
\noindent\textbf{Step 2: the high--low interaction.}

Since $\ell^1\hookrightarrow\ell^q$ for $q\ge1$,
\begin{equation}
\label{eq:besov-one-to-F}
\dot B^0_{1,1}(L^1_T)
\hookrightarrow
\dot F^0_{1,q}(L^1_T).
\end{equation}
The product $\dot S_{k-N}f\,\dot\Delta_kb$ is spectrally supported in an
annulus of size $2^k$.  Therefore \eqref{eq:besov-one-to-F} and Bernstein's
inequality imply
\begin{equation}
\label{eq:high-low-annular}
\|\nabla\cdot T_fb\|_{\dot F^0_{1,q}(L^1_T)}
\lesssim
\sum_k2^k
\|\dot S_{k-N}f\,\dot\Delta_kb\|_{L^1_TL^1_x}.
\end{equation}
Hölder's inequality in both time and space gives
\begin{equation}
\label{eq:high-low-holder}
2^k
\|\dot S_{k-N}f\,\dot\Delta_kb\|_{L^1_TL^1_x}
\le
\|\dot S_{k-N}f\|_{L^2_TL^2_x}
\|2^k\dot\Delta_kb\|_{L^2_TL^2_x}.
\end{equation}
The order $-1$ of the Poisson multiplier implies
\begin{equation}
\label{eq:high-low-poisson-block}
\|2^k\dot\Delta_kb\|_{L^2_TL^2_x}
\lesssim
\|\dot\Delta_kg\|_{L^2_TL^2_x}.
\end{equation}
Also,
\[
\|\dot S_{k-N}f\|_{L^2_TL^2_x}
\le
\sum_{m<k-N}
\|\dot\Delta_mf\|_{L^2_TL^2_x}.
\]
Substituting \eqref{eq:high-low-holder} and
\eqref{eq:high-low-poisson-block} into \eqref{eq:high-low-annular}, and then
using the low-frequency sum, we obtain
\begin{equation}\label{eq:high-low-sum}
\begin{aligned}
\|\nabla\cdot T_fb\|_{\dot F^0_{1,q}(L^1_T)}
&\lesssim
\sum_k
\left(
\sum_{m<k-N}\|\dot\Delta_mf\|_{L^2_TL^2_x}
\right)
\|\dot\Delta_kg\|_{L^2_TL^2_x}\\
&\le
\left(
\sum_m\|\dot\Delta_mf\|_{L^2_TL^2_x}
\right)
\left(
\sum_k\|\dot\Delta_kg\|_{L^2_TL^2_x}
\right).
\end{aligned}
\end{equation}
Applying Lemma~\ref{lem:jawerth} to both factors in
\eqref{eq:high-low-sum} proves
\begin{equation}
\label{eq:high-low-final}
\|\nabla\cdot T_fb\|_{\dot F^0_{1,q}(L^1_T)}
\lesssim
\|f\|_{\dot F^1_{1,q}(L^2_T)}
\|g\|_{\dot F^1_{1,q}(L^2_T)}.
\end{equation}

\medskip
\noindent\textbf{Step 3: the comparable-frequency remainder.}

Comparable input frequencies may generate output at any lower dyadic
level.  By the Fourier-support properties of the dyadic cutoffs, there
exists a fixed integer $N_1\ge1$, depending only on the chosen
Littlewood--Paley decomposition and on $N$, such that
\[
\dot\Delta_j
\left(
\dot\Delta_kf\,
\widetilde{\dot\Delta}_kb
\right)
=0
\qquad\text{whenever }j>k+N_1.
\]
 Since the derivative falls on the output frequency,
\begin{equation}
\label{eq:remainder-double-sum}
\|\nabla\cdot R(f,b)\|_{\dot B^0_{1,1}(L^1_T)}
\lesssim
\sum_j\sum_{k\ge j-N_1}2^j
\|\dot\Delta_kf\,\widetilde{\dot\Delta}_kb
\|_{L^1_TL^1_x}.
\end{equation}
All summands are nonnegative, so Tonelli's theorem allows us to reverse the
order of summation.  Since
\begin{equation}
\label{eq:remainder-geometric-sum}
\sum_{j\le k+N_1}2^j\lesssim2^k,
\end{equation}
we deduce from \eqref{eq:remainder-double-sum} that
\begin{equation}
\label{eq:remainder-single-sum}
\|\nabla\cdot R(f,b)\|_{\dot B^0_{1,1}(L^1_T)}
\lesssim
\sum_k2^k
\|\dot\Delta_kf\,\widetilde{\dot\Delta}_kb
\|_{L^1_TL^1_x}.
\end{equation}
By Hölder's inequality and the Poisson multiplier estimate,
\begin{align}
\label{eq:remainder-block-holder}
2^k
\|\dot\Delta_kf\,\widetilde{\dot\Delta}_kb
\|_{L^1_TL^1_x}
\le
\|\dot\Delta_kf\|_{L^2_TL^2_x}
\|2^k\widetilde{\dot\Delta}_kb\|_{L^2_TL^2_x}\lesssim
\|\dot\Delta_kf\|_{L^2_TL^2_x}
\|\widetilde{\dot\Delta}_kg\|_{L^2_TL^2_x}.
\end{align}
Combining \eqref{eq:besov-one-to-F},
\eqref{eq:remainder-single-sum}, and \eqref{eq:remainder-block-holder}, and
then applying Lemma~\ref{lem:jawerth}, gives
\begin{align}
\label{eq:remainder-final}
\|\nabla\cdot R(f,b)\|_{\dot F^0_{1,q}(L^1_T)}
\lesssim
\left(
\sum_k\|\dot\Delta_kf\|_{L^2_TL^2_x}
\right)
\left(
\sum_k\|\dot\Delta_kg\|_{L^2_TL^2_x}
\right)\lesssim
\|f\|_{\dot F^1_{1,q}(L^2_T)}
\|g\|_{\dot F^1_{1,q}(L^2_T)}.
\end{align}
Equations \eqref{eq:low-high-final}, \eqref{eq:high-low-final}, and
\eqref{eq:remainder-final} prove the nonlinear estimate.

Finally, Theorem~\ref{thm:general-heat} with
$(a,r,s)=(1,\infty,0)$ and $(1,2,0)$ yields respectively the
$\dot F^0_{1,q}(L^\infty_T)$ and
$\dot F^1_{1,q}(L^2_T)$ estimates for the Duhamel term.  Their sum is the
asserted $X_T^q$ bound.
\end{proof}

\subsection{Strong continuity and small-time smoothing}

The fixed-point construction requires more than uniform linear bounds.  We
also need a strong initial trace, absolute continuity of the time-refined
norm on short intervals, and smallness of the linear smoothing component as
$T\downarrow0$.  The first lemma records the two continuity properties.

\begin{lemma}[Strong continuity and time localization]
\label{lem:continuity-localization}
Let $1\le q<\infty$ and $s\in\R$.
\begin{enumerate}[label=\textup{(\roman*)}]
\item For every $f\in\dot F^s_{1,q}$,
\[
\|e^{t\Delta}f-f\|_{\dot F^s_{1,q}}\longrightarrow0
\qquad\text{as }t\downarrow0.
\]
\item If $1\le\rho<\infty$ and
$u\in\dot F^s_{1,q}(L^\rho(0,T))$, then
\[
\|\mathbf1_Iu\|_{\dot F^s_{1,q}(L^\rho(0,T))}\longrightarrow0
\]
whenever the measurable intervals $I\subset(0,T)$ satisfy $|I|\to0$.
\end{enumerate}
\end{lemma}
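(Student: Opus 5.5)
Both assertions will follow from dominated convergence in the mixed norm $L^1_x\ell^q_j$, once we have a pointwise majorant in $L^1_x\ell^q_j$ and pointwise convergence of each dyadic entry.

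For (i), fix $f\in\dot F^s_{1,q}$. Write
\[
\dot\Delta_j(e^{t\Delta}f-f)=(e^{t\Delta}-1)\dot\Delta_jf.
\]
We use the localized kernel of Lemma~\ref{lem:localized-heat-kernel} at $b=\infty$, i.e.\ $|K_j(t,y)|\lesssim L_j(y)$. Arguing as in \eqref{eq:weighted-convolution-peetre}, this gives the pointwise bound
\[
|\dot\Delta_j e^{t\Delta}f(x)|\lesssim(\dot\Delta_jf)^*(x).
\]
Hence every entry of the sequence satisfies
\[
2^{js}|\dot\Delta_j(e^{t\Delta}f-f)(x)|\lesssim 2^{js}(\dot\Delta_jf)^*(x),
\]
uniformly in $t$. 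By Corollary~\ref{cor:peetre-sequence}, the right-hand side belongs to $L^1_x\ell^q_j$.

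Next we need pointwise convergence of each entry. For fixed $j$, $\dot\Delta_jf$ is a bounded smooth function, since it is frequency localized and lies in $L^1$ (because $\|\dot\Delta_jf\|_{L^1}\le 2^{-js}\|f\|_{\dot F^s_{1,q}}$). Then $e^{t\Delta}\dot\Delta_jf(x)\to\dot\Delta_jf(x)$ for every $x$ as $t\downarrow0$, by continuity of the heat semigroup on bounded uniformly continuous functions. Dominated convergence in $\ell^q_j$ (majorant $(2^{js}(\dot\Delta_jf)^*(x))_j\in\ell^q$ for a.e.\ $x$, using $q<\infty$), followed by dominated convergence in $L^1_x$, then gives (i).

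For (ii), the key point is that $\mathbf 1_I$ acts only in time, so it commutes with $\dot\Delta_j$: $\dot\Delta_j(\mathbf1_Iu)=\mathbf1_I\dot\Delta_ju$. The dyadic entries are therefore
\[
2^{js}\|\mathbf1_I\dot\Delta_ju(\cdot,x)\|_{L^\rho(0,T)}\le 2^{js}\|\dot\Delta_ju(\cdot,x)\|_{L^\rho(0,T)}.
\]
The right-hand side is the defining majorant, which lies in $L^1_x\ell^q_j$. For a.e.\ $x$ and every $j$, the function $\dot\Delta_ju(\cdot,x)$ lies in $L^\rho(0,T)$ with $\rho<\infty$. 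Absolute continuity of the integral then gives $\|\mathbf1_I\dot\Delta_ju(\cdot,x)\|_{L^\rho}\to0$ as $|I|\to0$.

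The intervals here form a family rather than a sequence, so we argue along an arbitrary sequence $I_n$ with $|I_n|\to0$. Pointwise convergence holds for each fixed $x$ and $j$ along this sequence, so two applications of dominated convergence (first in $\ell^q_j$, then in $L^1_x$) give the claim. Since the sequence was arbitrary, the claim holds for the whole family.

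The main obstacle I expect is a measurability and pointwise-a.e.\ technicality rather than any estimate. We must justify that for a.e.\ $x$ the slice $\dot\Delta_ju(\cdot,x)$ is a genuine element of $L^\rho(0,T)$ for all $j$ simultaneously, and that this slice agrees with the Banach-valued block. This should follow from $u$ being a strongly measurable $L^\rho(0,T)$-valued tempered distribution with finite norm, together with Fubini applied countably many times.

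In (i), one also needs $q<\infty$ for the $\ell^q$ dominated convergence step. This is assumed in the statement.
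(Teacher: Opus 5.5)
Your proposal is correct and follows the paper's proof. In (i) you use the uniform-in-$t$ Peetre majorant $2^{js}(\dot\Delta_jf)^*$ obtained from the localized heat kernel, made integrable by Corollary~\ref{cor:peetre-sequence}, together with entrywise convergence and dominated convergence first in $\ell^q_j$ and then in $L^1_x$; in (ii) you use the defining majorant and absolute continuity of the $L^\rho$ integral. Your additions (boundedness of $\dot\Delta_jf$ via its $L^1$ bound, passing to sequences $I_n$, and the measurability remark) only make explicit details the paper leaves implicit.
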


\begin{proof}
We first prove (i).  Write $f_j=\dot\Delta_jf$.  For each fixed $j$, the
function $f_j$ is smooth and frequency localized, and hence
\[
e^{t\Delta}f_j(x)\longrightarrow f_j(x)
\]
for every $x$ as $t\downarrow0$.  The localized heat kernel estimate, with $0<t\le1$, gives
\begin{equation}
\label{eq:continuity-heat-majorant}
|e^{t\Delta}f_j(x)|
\lesssim
L_j*|f_j|(x)
\lesssim
f_j^*(x),
\end{equation}
where the last inequality follows by choosing the kernel decay exponent larger
than $M+2$.  Since also $|f_j(x)|\le f_j^*(x)$,
\begin{equation}
\label{eq:continuity-difference-majorant}
2^{js}|\dot\Delta_j(e^{t\Delta}f-f)(x)|
\lesssim
2^{js}f_j^*(x).
\end{equation}
Corollary~\ref{cor:peetre-sequence} shows that
\begin{equation}
\label{eq:continuity-majorant-norm}
\left\|(2^{js}f_j^*)_j
\right\|_{L^1_x\ell^q_j}
\lesssim
\|f\|_{\dot F^s_{1,q}}.
\end{equation}
Dominated convergence, first in the $\ell^q$ variable and then in $x$, now
yields
\[
\|e^{t\Delta}f-f\|_{\dot F^s_{1,q}}\to0.
\]

For (ii), define
\[
A_{j,I}(x)
=
2^{js}\|\dot\Delta_j u(\cdot,x)\|_{L^\rho(I)}.
\]
For every fixed $j$ and almost every $x$, absolute continuity of the
$L^\rho$ integral implies $A_{j,I}(x)\to0$ as $|I|\to0$.  Moreover,
\begin{equation}
\label{eq:time-localization-majorant}
0\le A_{j,I}(x)
\le
2^{js}\|\dot\Delta_j u(\cdot,x)\|_{L^\rho(0,T)}.
\end{equation}
The $\ell^q$-norm of the dominating sequence in
\eqref{eq:time-localization-majorant} is integrable in $x$ by the assumption
on $u$.  Dominated convergence in $L^1_x\ell^q_j$ therefore proves
\[
\|\mathbf1_Iu\|_{\dot F^s_{1,q}(L^\rho(0,T))}\to0.
\]
\end{proof}

Strong continuity alone does not imply that the full
$\dot F^1_{1,q}(L^2_T)$ smoothing norm becomes small.  The following
dyadic dominated-convergence argument provides precisely the smallness
needed for local existence with arbitrary initial data.

\begin{lemma}[Small-time smoothing]
\label{lem:small-time}
If $1\le q<\infty$ and $u_0\in\dot F^0_{1,q}$, then
\[
\|e^{t\Delta}u_0\|_{\dot F^1_{1,q}(L^2(0,T))}
\longrightarrow0
\qquad\text{as }T\downarrow0.
\]
\end{lemma}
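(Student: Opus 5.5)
The plan is a dyadic dominated-convergence argument, parallel to part (ii) of Lemma~\ref{lem:continuity-localization}, using the pointwise majorant produced in the proof of Theorem~\ref{thm:general-heat}. Write $f_j=\dot\Delta_ju_0$ and set
\[
A_{j,T}(x)=2^{j}\|\dot\Delta_je^{t\Delta}u_0(\cdot,x)\|_{L^2(0,T)},
\]
so that $\|e^{t\Delta}u_0\|_{\dot F^1_{1,q}(L^2(0,T))}=\|(A_{j,T})_j\|_{L^1_x\ell^q_j}$. We want to show that this mixed norm tends to $0$ as $T\downarrow0$.

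First, we obtain a $T$-independent integrable majorant. By \eqref{eq:free-dyadic-bound} with $(a,r,s)=(1,2,0)$, so that $\sigma=1$ and $2/r=1$, we have, for every $T\in(0,\infty]$,
\[
A_{j,T}(x)\le A_{j,\infty}(x)\lesssim f_j^*(x).
\]
Corollary~\ref{cor:peetre-sequence} then gives $\|(f_j^*)_j\|_{L^1_x\ell^q_j}\lesssim\|u_0\|_{\dot F^0_{1,q}}<\infty$. In particular, for almost every $x$ the sequence $(f_j^*(x))_j$ lies in $\ell^q$, and $x\mapsto\|(f_j^*(x))_j\|_{\ell^q}$ is integrable.

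Second, we establish pointwise convergence for fixed $j$ and $x$. The map $t\mapsto\dot\Delta_je^{t\Delta}u_0(x)=(K_j(t,\cdot)*f_j)(x)$ is bounded on $(0,\infty)$ by $C f_j^*(x)$. This follows from the kernel bound in Lemma~\ref{lem:localized-heat-kernel} with the exponential factor dropped, together with \eqref{eq:weighted-convolution-peetre}. Where $f_j^*(x)<\infty$, which holds almost everywhere, we therefore get
\[
\int_0^T|\dot\Delta_je^{t\Delta}u_0(x)|^2\,dt\lesssim T\,f_j^*(x)^2\to0,
\]
so $A_{j,T}(x)\to0$ as $T\downarrow0$. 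The integrability of $L^2$ on $(0,\infty)$ alone would also suffice here, by absolute continuity of the integral.

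Third, we apply dominated convergence twice. Since $A_{j,T}(x)\lesssim f_j^*(x)$ with $(f_j^*(x))_j\in\ell^q$ and $q<\infty$, dominated convergence in the counting measure gives $\|(A_{j,T}(x))_j\|_{\ell^q}\to0$ for almost every $x$. This quantity is also dominated by $\|(f_j^*(x))_j\|_{\ell^q}\in L^1_x$, so a second application of dominated convergence in $x$ yields the claim. Monotonicity in $T$ allows us to work along any sequence $T_n\downarrow0$.

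The only delicate point is the first step. We must ensure the majorant is uniform in $T$ and integrable in the mixed norm, and this is exactly where the Banach-valued Peetre machinery is needed: at $p=1$ the Hardy--Littlewood maximal function is not available. Corollary~\ref{cor:peetre-sequence} supplies the required bound, so the remaining steps are routine. We also need $q<\infty$, so that dominated convergence applies in $\ell^q$ and not merely in a weaker sense; this is guaranteed by the hypothesis.
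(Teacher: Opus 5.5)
Your proof is correct and follows essentially the paper's argument. Both use the $T$-uniform Peetre majorant $f_j^*$, made integrable in $L^1_x\ell^q_j$ by Corollary~\ref{cor:peetre-sequence}, together with blockwise decay as $T\downarrow0$ and dominated convergence first in $\ell^q$ and then in $x$; the only cosmetic difference is that the paper keeps the exponential factor, obtaining the single bound $2^j\|\dot\Delta_je^{t\Delta}u_0(\cdot,x)\|_{L^2(0,T)}\lesssim(1-e^{-2cT2^{2j}})^{1/2}f_j^*(x)$ that serves as both majorant and convergence mechanism, whereas you derive these two facts separately.
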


\begin{proof}
Let $u_j=\dot\Delta_ju_0$.  From the localized heat kernel estimate and the weighted convolution
argument used in Theorem~\ref{thm:general-heat},
\begin{equation}
\label{eq:small-time-pointwise-heat}
|\dot\Delta_je^{t\Delta}u_0(x)|
\lesssim
e^{-ct2^{2j}}u_j^*(x).
\end{equation}
Consequently,
\begin{equation}\label{eq:small-time-block}
\begin{aligned}
	2^j
	\|\dot\Delta_je^{t\Delta}u_0(\cdot,x)\|_{L^2(0,T)}
	&\lesssim
	2^j
	\left(\int_0^T e^{-2ct2^{2j}}\,dt\right)^{1/2}u_j^*(x)\\
	&\lesssim
	\bigl(1-e^{-2cT2^{2j}}\bigr)^{1/2}u_j^*(x).
\end{aligned}
\end{equation}
For every fixed $j$ and almost every $x$, the coefficient on the right tends
to zero as $T\downarrow0$, and it is bounded uniformly by $1$.  Therefore, for almost every $x$,
\begin{equation}
\label{eq:small-time-pointwise-convergence}
\left(
\sum_j
\left[
2^j\|\dot\Delta_je^{t\Delta}u_0(\cdot,x)\|_{L^2(0,T)}
\right]^q
\right)^{1/q}
\longrightarrow0.
\end{equation}
By \eqref{eq:small-time-block}, the same expression is dominated by
\begin{equation}
\label{eq:small-time-majorant}
C\left(\sum_j(u_j^*(x))^q\right)^{1/q}.
\end{equation}
Corollary~\ref{cor:peetre-sequence} shows that the majorant in
\eqref{eq:small-time-majorant} belongs to $L^1_x$.  Dominated convergence in $x$ now gives
\[
\|e^{t\Delta}u_0\|_{\dot F^1_{1,q}(L^2(0,T))}\to0.
\]
\end{proof}

\subsection{Proof of critical well-posedness}

We now assemble the time-refined heat estimates, the bilinear bound, and the
small-time smoothing property.  The proof first constructs a solution in a
small smoothing ball, then establishes uniqueness in the full class,
continuity in time, continuous dependence, and finally the small-data global
result.

\begin{proof}[Proof of Theorem~\ref{thm:ks-classical}]
Let
\[
A=\|u_0\|_{\dot F^0_{1,q}}.
\]
Theorem~\ref{thm:general-heat} gives a constant $C_L>0$, independent of
$T$, such that
\begin{equation}
\label{eq:fixed-point-linear-bound}
\|e^{t\Delta}u_0\|_{\dot F^0_{1,q}(L^\infty_T)}
+
\|e^{t\Delta}u_0\|_{\dot F^1_{1,q}(L^2_T)}
\le C_LA.
\end{equation}
Proposition~\ref{prop:bilinear} gives a constant $C_B>0$, also independent of
$T$, satisfying
\begin{equation}
\label{eq:fixed-point-bilinear-bound}
\|\cB(f,g)\|_{X_T^q}
\le
C_B
\|f\|_{\dot F^1_{1,q}(L^2_T)}
\|g\|_{\dot F^1_{1,q}(L^2_T)}.
\end{equation}
Choose $\delta>0$ so small that
\begin{equation}
\label{eq:fixed-point-delta-choice}
4C_B\delta\le\frac12,
\qquad
4C_B\delta^2\le\delta.
\end{equation}
By Lemma~\ref{lem:small-time}, there is $T>0$ such that
\begin{equation}
\label{eq:fixed-point-linear-smallness}
\|e^{t\Delta}u_0\|_{\dot F^1_{1,q}(L^2_T)}\le\delta.
\end{equation}
Set $M=C_LA+\delta$ and consider the closed subset
\begin{equation}
\label{eq:fixed-point-ball}
E_T=
\left\{
 u\in X_T^q:\
 \|u\|_{\dot F^0_{1,q}(L^\infty_T)}\le M,
 \quad
 \|u\|_{\dot F^1_{1,q}(L^2_T)}\le2\delta
\right\}.
\end{equation}
Endowed with the metric induced by $X_T^q$, this is a complete metric space.
Define
\[
\Phi(u)=e^{t\Delta}u_0-\cB(u,u).
\]
If $u\in E_T$, then \eqref{eq:fixed-point-linear-bound},
\eqref{eq:fixed-point-bilinear-bound}, and
\eqref{eq:fixed-point-delta-choice} give
\begin{equation}
	\begin{aligned}
		\label{eq:self-map-Linfty}
		\|\Phi(u)\|_{\dot F^0_{1,q}(L^\infty_T)}
		&\le
		C_LA+C_B
		\|u\|_{\dot F^1_{1,q}(L^2_T)}^2\\
		&\le
		C_LA+4C_B\delta^2
		\le M,
	\end{aligned}
\end{equation}
and, using also \eqref{eq:fixed-point-linear-smallness},
\begin{equation}
	\begin{aligned}
		\label{eq:self-map-smoothing}
		\|\Phi(u)\|_{\dot F^1_{1,q}(L^2_T)}
		&\le
		\|e^{t\Delta}u_0\|_{\dot F^1_{1,q}(L^2_T)}
		+C_B\|u\|_{\dot F^1_{1,q}(L^2_T)}^2\\
		&\le
		\delta+4C_B\delta^2
		\le2\delta.
	\end{aligned}
\end{equation}
Equations \eqref{eq:self-map-Linfty} and
\eqref{eq:self-map-smoothing} show that $\Phi(E_T)\subset E_T$.  Moreover, by bilinearity,
\begin{equation}
\label{eq:bilinear-difference-identity}
\cB(u,u)-\cB(v,v)
=
\cB(u-v,u)+\cB(v,u-v).
\end{equation}
For $u,v\in E_T$, \eqref{eq:fixed-point-bilinear-bound},
\eqref{eq:bilinear-difference-identity}, and
\eqref{eq:fixed-point-delta-choice} yield
\begin{equation}
	\begin{aligned}
		\label{eq:local-contraction}
		\|\Phi(u)-\Phi(v)\|_{X_T^q}
		&\le
		C_B
		\left(
		\|u\|_{\dot F^1_{1,q}(L^2_T)}
		+
		\|v\|_{\dot F^1_{1,q}(L^2_T)}
		\right)
		\|u-v\|_{X_T^q}\\
		&\le
		4C_B\delta\|u-v\|_{X_T^q}\\
		&\le
		\frac12\|u-v\|_{X_T^q}.
	\end{aligned}
\end{equation}
Banach's fixed-point theorem therefore gives a unique fixed point of $\Phi$ in
$E_T$, which is a local mild solution.

We now prove uniqueness in the whole class $X_T^q$.  Let $u,v\in X_T^q$ be
mild solutions with the same initial data.  By the time-localization part of
Lemma~\ref{lem:continuity-localization}, we can choose a finite partition
\[
0=t_0<t_1<\cdots<t_N=T
\]
such that, for every $I_m=(t_{m-1},t_m)$,
\begin{equation}
\label{eq:uniqueness-partition-smallness}
C_B\left(
\|u\|_{\dot F^1_{1,q}(L^2(I_m))}
+
\|v\|_{\dot F^1_{1,q}(L^2(I_m))}
\right)<\frac12.
\end{equation}
On the first interval, the difference satisfies
\begin{equation}
\label{eq:uniqueness-mild-difference}
u(t)-v(t)
=
-\int_0^t e^{(t-\tau)\Delta}
\left[
\mathcal N(u)(\tau)-\mathcal N(v)(\tau)
\right]d\tau.
\end{equation}
where
\[
\mathcal N(w)=\nabla\cdot
\left(w\nabla(-\Delta)^{-1}w\right).
\]
Applying the bilinear difference estimate to
\eqref{eq:uniqueness-mild-difference} and using
\eqref{eq:uniqueness-partition-smallness}, we obtain
\begin{equation}
\label{eq:uniqueness-absorption}
\|u-v\|_{X_{I_1}^q}
\le
\frac12\|u-v\|_{X_{I_1}^q}.
\end{equation}
Hence $u=v$ on $I_1$.  Suppose inductively that equality has been proved up to
$t_{m-1}$.  Writing both solutions in shifted mild form from $t_{m-1}$, the
initial difference on $I_m$ vanishes, and the same estimate gives equality on
$I_m$.  Induction proves $u=v$ on $[0,T]$.

We next establish time continuity.  Write
\[
u(t)=e^{t\Delta}u_0-W(t),
\qquad
W(t)=\int_0^t e^{(t-\tau)\Delta}F(\tau)\,d\tau,
\]
where, by Proposition~\ref{prop:bilinear},
\begin{equation}
\label{eq:continuity-forcing}
F=\nabla\cdot
\left(u\nabla(-\Delta)^{-1}u\right)
\in\dot F^0_{1,q}(L^1_T).
\end{equation}
The free heat flow belongs to
$C([0,T];\dot F^0_{1,q})$ by
Lemma~\ref{lem:continuity-localization}.  It remains to treat $W$.

We use an approximation argument that is compatible with the time-refined
norm.  Since $q<\infty$ and the time exponent is one, finite dyadic
truncations are dense in $\dot F^0_{1,q}(L^1_T)$; after fixing finitely many
frequencies, simple functions in time with smooth spatial values are dense in
the corresponding finite sum.  We may therefore choose $F^{(n)}$ such that
\begin{equation}
\label{eq:forcing-approximation}
\|F^{(n)}-F\|_{\dot F^0_{1,q}(L^1_T)}\longrightarrow0,
\end{equation}
where each $F^{(n)}$ has only finitely many dyadic components and is simple in
time.  Define
\[
W_n(t)=\int_0^t e^{(t-\tau)\Delta}F^{(n)}(\tau)\,d\tau.
\]
For such an approximant, every dyadic component is a finite sum of ordinary
Bochner integrals of smooth frequency-localized functions.  The strong
continuity of the heat semigroup and the absolute continuity of the time
integral imply
\[
W_n\in C([0,T];\dot F^0_{1,q}).
\]
On the other hand, Theorem~\ref{thm:general-heat} with
$(a,r,s)=(1,\infty,0)$ gives
\begin{equation}
\label{eq:duhamel-approximation-bound}
\|W_n-W\|_{\dot F^0_{1,q}(L^\infty_T)}
\lesssim
\|F^{(n)}-F\|_{\dot F^0_{1,q}(L^1_T)}.
\end{equation}
Moreover,
\begin{equation}
\label{eq:duhamel-uniform-bound}
\sup_{0\le t\le T}
\|W_n(t)-W(t)\|_{\dot F^0_{1,q}}
\le
\|W_n-W\|_{\dot F^0_{1,q}(L^\infty_T)},
\end{equation}
because the supremum in time can be moved inside each dyadic component to
obtain the time-refined norm.  Thus $W_n\to W$ uniformly in
$C([0,T];\dot F^0_{1,q})$.  Being a uniform limit of continuous functions,
$W$ is continuous.  Consequently,
\[
u\in C([0,T];\dot F^0_{1,q}).
\]

For continuous dependence, suppose
$u_0^{(n)}\to u_0$ in $\dot F^0_{1,q}$.  Choose $T$ such that
\begin{equation}
\label{eq:continuous-dependence-common-time}
\|e^{t\Delta}u_0\|_{\dot F^1_{1,q}(L^2_T)}<\frac\delta2.
\end{equation}
The linear estimate implies
\begin{equation}
\label{eq:continuous-dependence-linear}
\|e^{t\Delta}(u_0^{(n)}-u_0)\|_{\dot F^1_{1,q}(L^2_T)}
\lesssim
\|u_0^{(n)}-u_0\|_{\dot F^0_{1,q}},
\end{equation}
so the same $T$ works for all sufficiently large $n$, with the corresponding
solutions lying in a common contraction ball.  Their difference satisfies
\begin{equation}
	\begin{aligned}
		\label{eq:continuous-dependence-estimate}
		\|u^{(n)}-u\|_{X_T^q}
		&\le
		C_L\|u_0^{(n)}-u_0\|_{\dot F^0_{1,q}}\\
		&\quad+
		C_B\left(
		\|u^{(n)}\|_{\dot F^1_{1,q}(L^2_T)}
		+
		\|u\|_{\dot F^1_{1,q}(L^2_T)}
		\right)
		\|u^{(n)}-u\|_{X_T^q}.
	\end{aligned}
\end{equation}

By the common smallness ensured by
\eqref{eq:continuous-dependence-common-time} and
\eqref{eq:continuous-dependence-linear}, the coefficient of the last term in
\eqref{eq:continuous-dependence-estimate} is strictly smaller than one and
can be absorbed.  Therefore $u^{(n)}\to u$ in $X_T^q$.

Finally, assume that $A$ is small and work on $(0,\infty)$.  Consider
\begin{equation}
\label{eq:global-fixed-point-ball}
E_\infty=
\{u\in X_\infty^q:\|u\|_{X_\infty^q}\le2C_LA\}.
\end{equation}
For $u\in E_\infty$,
\begin{equation}
\label{eq:global-self-map}
\|\Phi(u)\|_{X_\infty^q}
\le
C_LA+C_B(2C_LA)^2
=
C_LA+4C_BC_L^2A^2.
\end{equation}
Thus \eqref{eq:global-self-map} shows that $\Phi$ maps $E_\infty$ into itself
if $4C_BC_LA\le1$.  For $u,v\in E_\infty$,
\begin{equation}
\label{eq:global-contraction}
\|\Phi(u)-\Phi(v)\|_{X_\infty^q}
\le
4C_BC_LA\|u-v\|_{X_\infty^q}.
\end{equation}
If, for example,
\begin{equation}
\label{eq:global-smallness-threshold}
A\le(8C_BC_L)^{-1},
\end{equation}
then $\Phi$ is a contraction on $E_\infty$.  This proves global existence and
uniqueness for sufficiently small initial data.
\end{proof}

\section{The drift--diffusion system}
\label{sec:drift}

The final application shows that the preceding framework is stable under a
simple coupling and a nonnegative damping term.  Since both nonlinearities
have the bilinear form covered by Proposition~\ref{prop:bilinear}, the
argument reduces to a product-space version of the Keller--Segel fixed point.

\begin{proof}[Proof of Corollary~\ref{cor:drift-diffusion}]
The mild formulation of system~\eqref{eq:drift-system-intro} is
\begin{align}
\label{eq:drift-mild-system}
v(t)
&=e^{t\Delta}v_0
-
\int_0^t e^{(t-\tau)\Delta}
\nabla\cdot
\left(w\nabla(-\Delta)^{-1}w\right)(\tau)\,d\tau,\\
w(t)
&=e^{t(\Delta-\lambda)}w_0
-
\int_0^t e^{(t-\tau)(\Delta-\lambda)}
\nabla\cdot
\left(v\nabla(-\Delta)^{-1}w\right)(\tau)\,d\tau.
\end{align}
Since
\begin{equation}
\label{eq:damped-semigroup-factorization}
e^{t(\Delta-\lambda)}=e^{-\lambda t}e^{t\Delta},
\qquad \lambda\ge0,
\end{equation}
the localized kernel bounds, the time-refined heat estimates, and the
small-time smoothing estimate remain valid for this damped semigroup, with
constants no larger than those for $e^{t\Delta}$.

Let
\begin{equation}
\label{eq:product-solution-space}
\mathbf X_T^q=X_T^q\times X_T^q
\end{equation}
with the sum norm and define the fixed-point map
$\mathbf\Phi=(\Phi_1,\Phi_2)$ by the two right-hand sides in
\eqref{eq:drift-mild-system}.  Proposition~\ref{prop:bilinear} gives
\begin{align}
\label{eq:drift-fixed-point-bounds}
\|\Phi_1(v,w)\|_{X_T^q}
&\le
C_L\|v_0\|_{\dot F^0_{1,q}}
+C_B\|w\|_{\dot F^1_{1,q}(L^2_T)}^2,\\
\|\Phi_2(v,w)\|_{X_T^q}
&\le
C_L\|w_0\|_{\dot F^0_{1,q}}
+C_B
\|v\|_{\dot F^1_{1,q}(L^2_T)}
\|w\|_{\dot F^1_{1,q}(L^2_T)}.
\end{align}
The corresponding difference estimates are bilinear and have the same form.
By Lemma~\ref{lem:small-time}, choose $T$ so that both free heat flows are
small in their $\dot F^1_{1,q}(L^2_T)$ components.  Choosing a product ball
with bounded $L^\infty_T\dot F^0_{1,q}$ components and sufficiently small
smoothing components, the preceding estimates show that $\mathbf\Phi$ maps
the ball into itself and is a contraction.  This gives the local mild
solution.

Uniqueness in the full class follows by partitioning the time interval exactly
as in the proof of Theorem~\ref{thm:ks-classical}; on each subinterval, the sum
of the four smoothing norms can be made small enough to absorb the bilinear
difference terms.  Strong continuity and continuous dependence follow from
the same semigroup and time-localization arguments.

If
\begin{equation}
\label{eq:drift-small-data-condition}
\|v_0\|_{\dot F^0_{1,q}}
+
\|w_0\|_{\dot F^0_{1,q}}
\end{equation}
is sufficiently small, the product contraction can instead be performed on
$\mathbf X_\infty^q$.  This proves the global assertion.
\end{proof}
\section*{Acknowledgments}
This work was supported by the Hubei Provincial Natural Science
Foundation of China (Grant No.~2026AFA034) and the National Natural Science
Foundation of China (Grant No.~12531017).

\end{document}